\newtheorem{theorem}{Theorem}[section]
\newtheorem{corollary}[theorem]{Corollary}
\newtheorem{lemma}[theorem]{Lemma}
\newtheorem{proposition}[theorem]{Proposition}
\theoremstyle{definition}
\newtheorem{definition}[theorem]{Definition}
\newtheorem{remark}[theorem]{Remark}
\newtheorem{example}[theorem]{Example}
\numberwithin{equation}{section}
\begin{document}


\baselineskip=17pt


\title{Invariant $\varphi $-means on multiplier completion of Banach algebras with application to hypergroups}

\author{Mehdi Nemati\\
Department of Mathematical Sciences\\
Isfahan University of Technology\\
Isfahan 84156-83111, Iran\\
E-mail: m.nemati@cc.iut.ac.ir
\and 
Maryam Rajaei Rizi\\
Department of Mathematical Sciences\\
Isfahan University of Technology\\
	Isfahan 84156-83111, Iran\\
E-mail: m.rajaierizi@math.iut.ac.ir}

\date{}

\maketitle


\renewcommand{\thefootnote}{}

\footnote{2010 \emph{Mathematics Subject Classification}: Primary 46H20, 43A62,; Secondary 43A15.}

\footnote{\emph{Key words and phrases}: Fourier algebra, invariant $\varphi$-mean, multiplier norm, ultraspherical hypergroup.}

\renewcommand{\thefootnote}{\arabic{footnote}}
\setcounter{footnote}{0}


\begin{abstract}
Let ${\mathcal A}$ be a Banach algebra and let $\varphi $ be a non-zero character on ${\mathcal A}$. Suppose that ${\mathcal A}_M$ is the closure of the faithful Banach algebra ${\mathcal A}$ in the multiplier norm. In this paper, topologically left invariant $\varphi$-means on ${\mathcal A}_M^*$ are defined and studied. Under
some conditions on ${\mathcal A}$, we will show that the set of topologically left invariant $\varphi$-means on ${\mathcal A}^*$ and on ${\mathcal A}_M^*$ have the same cardinality. We also study the left uniformly continuous functionals associated with these algebras. The main applications are concerned with the Fourier algebra of an ultraspherical hypergroup $H$. In particular, we  obtain some characterizations of discreteness of $H$. 
\end{abstract}

\section{Introduction}

Let $G$ be a locally compact group and let $A_M(G)$ denote the closure of the Fourier algebra
$A(G)$ of $G$  in the multiplier norm. 
Recently, Forrest and Miao  \cite{fm} studied topologically invariant means on $A_M(G)$, and also studied the  uniformly
continuous functionals associated with some important  group algebras such
as $A(G)$ and $A_M(G)$. Using the above theory, the authors proved that there is a bijection between left invariant means on  $VN(G)$ and $A_M(G)^*$. Various links with weakly almost periodic functionals, and with Arens regularity, are also explored. See also \cite{Nem} for quantum group generalizations of these results.

In this paper, we shall prove an extension of these connections in the case of Banach algebra settings.

Let $H$ be an ultraspherical hypergroup associated to a locally compact group
$G$
and a spherical
projector $\pi: C_c(G)\longrightarrow C_c(G)$ which was introduced
and studied in \cite{M}. In recent years people have become interested in studying the properties of the Fourier algebra $A(H)$. For example, Shravan Kumar  showed in \cite{Sh} that there is a unique topological
invariant mean on $A(H)^*$ if and only if $H$ is discrete. More recently, Degenfeld-Schonburg, Kaniuth and Lasser  investigated spectral synthesis properties of Fourier algebras on  ultraspherical hypergroups in \cite{Kan}. Alaghmandan characterized in \cite{Alag2} 
the existence of bounded approximate identities for  Fourier algebras
of ultraspherical hypergroups. 

The paper is organized as follows. In Section 2, some preliminaries and notations are presented
 that will be used throughout the rest of the paper. In Section 3, we study on topologically left invariant $\varphi$-means on ${\mathcal A}_M^{*}$ and prove that for a faithful Banach algebra ${\mathcal A}$ with a certain condition, there is a bijection between topologically left invariant $\varphi$-means with norm one on ${\mathcal A}^*$ and on ${\mathcal A}_M^*$. We also show that the number of topologically left invariant $\varphi$-means with norm one on ${\mathcal A}^*$ is equal to the number of topologically left invariant $\varphi$-means with norm one on ${\rm LUC}({\mathcal A}^*)$. We then prove that ${\mathcal A}_{M}$ is an ideal in its second dual if and only if ${\mathcal A}$ is an ideal in its second dual. 
In Section 4, we give some application and additional results on hypergroup algebras, in particular on Fourier algebras of  ultraspherical hypergroups. For example, we show that there exists a unique topologically  invariant mean on $A_M(H)^{*}$ if and only if $H$ is discrete. Moreover, we prove that $\rm {UCB}(\hat{H})\subseteq \rm{WAP}(A(H)^*)$ if and only if $H$ is discrete. 

\section{Preliminaries}

Let ${\mathcal A}$ be a Banach algebra with norm $\Vert \cdot \Vert$. The Banach algebra ${\mathcal A}$ is called {\it faithful} if for each $a \in {\mathcal A} \setminus \lbrace 0 \rbrace$, there are $ b, c \in {\mathcal A}$ such that $ab \neq 0$ and $ca \neq 0$. We define the {\it multiplier seminorm} $\|\cdot\|_M$ on ${\mathcal A}$ by
$$
\|a\|_M=\sup_{b\in {\mathcal A}, \|b\|\leq 1}\{\|ab\|, \|ba\|\}\quad (a\in {\mathcal A}).
$$
It is easy to see that $\|\cdot\|_M$ is a norm on ${\mathcal A}$ if and only if ${\mathcal A}$ is faithful. Moreover, for each $a\in {\mathcal A}$ we have $\|a\|_M\leq \|a\|$. For faithful Banach algebra ${\mathcal A}$ 
we denote ${\mathcal A}_M$ to be the completion of ${\mathcal A}$ with respect to
the multiplier norm. Then, ${\mathcal A}_M$ is a Banach algebra and contains ${\mathcal A}$ as a norm dense two-sided ideal.

Let $B$ be a Banach algebra with norm $\|\cdot \|_B$. Then the Banach algebra ${\mathcal A}$ is a {\it Segal algebra} in $B$ if ${\mathcal A}$ is a dense, two-
sided ideal in $B$ and there exists a constant $l>0$ such that
$$
\|a\|_B\leq l\|a\|\quad (a\in {\mathcal A})
$$
Therefore, ${\mathcal A}$ is a Segal algebra in ${\mathcal A}_M$. Thus, by \cite[Theorem 2.1]{bu} we obtain that the character spaces of these two Banach algebras, denoted by $\Delta({\mathcal A})$ and $\Delta({\mathcal A}_M)$, are homeomorphic.
It is easy to check that ${\mathcal A}^*$ is a Banach ${\mathcal A}$-bimodule by the following actions:
$$\langle f\cdot a, b\rangle=\langle f, ab\rangle,\quad \langle a\cdot f, b\rangle=\langle f, ba\rangle \quad (f \in {\mathcal A}^*,~ a, b \in {\mathcal A}).$$
We define the space of {\it left uniformly continuous functionals} on ${\mathcal A}$ as follows 
$$\rm{LUC} ({\mathcal A}^*)=\overline{\langle {\mathcal A}^*\cdot {\mathcal A}\rangle}^{\Vert \cdot \Vert_{{\mathcal A}^*}}.$$
Here, for a set $Y$ in a Banach space, $\langle Y\rangle$ denotes the linear span of
$Y$ in the space. It is known that there are two multiplications $\square$ and $\diamondsuit$ on the second dual ${\mathcal A}^{**}$ of ${\mathcal A}$, called, respectivelly, the left and the right Arens products, each extending the multiplication on ${\mathcal A}$. The left Arens product $\square$ is induced by the left ${\mathcal A}$-module structure on ${\mathcal A}$. That is, for $m, n \in {\mathcal A}^{**}$, $f \in {\mathcal A}^*$ and $a \in {\mathcal A}$ we have
$$\langle m \square n, f\rangle=\langle m, n\cdot f\rangle, \quad \langle n\cdot f, a\rangle=\langle n, f\cdot a\rangle.$$
Similarly, one can define the right Arens product.
A Banach right ${\mathcal A}$-submodule $X$ of ${\mathcal A}^*$ is called left introverted if $m\cdot f \in X$ for all $m \in X^*$ and $f \in X$. 
In this case, $X^*$ is a Banach algebra with the multiplication induced by the left Arens product $\square$ inherited from ${\mathcal A}^{**}$.
Examples of left introverted subspace of ${\mathcal A}^*$ are $\rm{LUC}({\mathcal A}^*)$ and $\rm{WAP}({\mathcal A}^*)$, the space of {\it weakly almost periodic functional} on ${\mathcal A}$; that is all $f\in {\mathcal A}^*$ such that the set 
$\lbrace a\cdot f:~a \in {\mathcal A},~ \Vert a \Vert \leq 1 \rbrace$
is relatively weakly compact in ${\mathcal A}^{*}$.

\section{Topologically invariant $\varphi$-means and uniformly continuous functionals}
Throughout this section let ${\mathcal A}$ be a Banach algebra and $\varphi$ a character on ${\mathcal A}$.
We start  with the next definition which is the main object of the section.
\begin{definition}
Let ${\mathcal A}$ be a Banach algebra and let $\varphi\in \Delta({\mathcal A})$ be a character on ${\mathcal A}$. Suppose that $X$ is a Banach right ${\mathcal A}$-submodule of ${\mathcal A}^*$ containing $\varphi$. Then $m \in X^*$ is called a {\it topologically left invariant $\varphi$-mean} (${\rm TLI}_{\varphi}$) on $X$ if $m(\varphi)=1$ and $\langle m, f\cdot a\rangle=\varphi(a)\langle m, f\rangle$ for all $f\in X$ and $a\in {\mathcal A}$. We denote the set of all topologically left invariant $\varphi$-means on $X$ by ${\rm TLI}_{\varphi}(X^*)$. If in addition to the above conditions, $\Vert m \Vert=1$, then $m$ is called a topologically left invariant $\varphi$-mean  with norm one (${\rm TLIM}_{\varphi}$) on $X$. We also denote by ${\rm TLIM}_{\varphi}(X^*)$ the set of all topologically left invariant $\varphi$-means with norm one on $X$. 
\end{definition}
Notice that, the set of  topologically left invariant $\varphi$-means coincides with the set of $\varphi$-means introduced and studied
in \cite{Kan1, Kan2}. A Banach algebra ${\mathcal A}$ is said to be {\it $\varphi$-amenable} if there is a topologically left invariant $\varphi$-mean on ${\mathcal A}^{*}$.
Suppose that ${\mathcal A}$ is a faithful Banach algebra. Then ${\mathcal A}^*$ is a Banach ${\mathcal A}_M$-bimodule with the following module actions: 
$$\langle f\cdot a, b\rangle=\langle f, ab\rangle, \quad \langle a\cdot f, b\rangle=\langle f, ba\rangle,$$
for all $f \in {\mathcal A}^*$, $a \in {\mathcal A}_M$, $b \in {\mathcal A}$. 
Let
$i:~{\mathcal A}\rightarrow {\mathcal A}_M$
be the inclusion map. Then $i$ is an injective Banach ${\mathcal A}_M$-bimodule morphism. Consider the adjoints 
$i^{*}:~{\mathcal A}^{*}_M\rightarrow {\mathcal A}^*$ and $i^{**}:~{\mathcal A}^{**}\rightarrow {\mathcal A}^{**}_M$ of $i$. Since  $i$ has  a dense range, $i^*$ is injective. It is not hard to see that $i^*$ is in fact the restriction map.
Therefore, we can identify ${\mathcal A}^*_M$ with a subset of ${\mathcal A}^*$ and ${\mathcal A}^{**}$ with a subset of ${\mathcal A}^{**}_M$.
\begin{proposition}\label{p:1} 
Let ${\mathcal A}$ be a faithful Banach algebra.
Then the following statements hold.\\
{\rm (i)} ${\mathcal A}^*\cdot {\mathcal A}_M \subseteq {\rm LUC}({\mathcal A}^*)$.\\
{\rm (ii)} $i^*({\rm LUC}({\mathcal A}_M^*)) \subseteq {\rm LUC}({\mathcal A}^*)$.\\
{\rm (iii)} If ${\mathcal A}_M$ has a bounded right approximate identity, then ${\mathcal A}^*\cdot {\mathcal A}_M = {\rm LUC}({\mathcal A}^*)$.\\
{\rm (iv)} $f\cdot a$, $a\cdot f \in i^*({\mathcal A}^{*}_M)$ for all $f \in {\mathcal A}^*$ and $a\in {\mathcal A}$.\\
{\rm (v)} $i^*(a\cdot f)=a\cdot i^*(f)$ and $i^*(f\cdot a)=i^*(f)\cdot a$ for all $a\in {\mathcal A}_M$ and $f \in {\mathcal A}^*_M$.
\end{proposition}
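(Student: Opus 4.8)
The plan is to establish the five claims in order of logical dependence, proving the algebraic identity (v) and the continuity statement (iv) first, then bootstrapping to (i), (ii) and (iii). The two facts I would lean on throughout are that ${\mathcal A}$ is a dense two-sided ideal in ${\mathcal A}_M$ — so that $ab, ba\in {\mathcal A}$ whenever one of the factors lies in ${\mathcal A}$ — and the multiplier-norm inequalities $\|ab\|\le \|a\|\,\|b\|_M$ and $\|ab\|\le \|a\|_M\,\|b\|$ for $a,b\in {\mathcal A}$, the latter of which extends to $\|ab\|\le \|a\|_M\,\|b\|$ for $a\in {\mathcal A}_M$ and $b\in {\mathcal A}$, since left multiplication by $a\in {\mathcal A}_M$ is the operator-norm limit of the maps $b\mapsto a_nb$ for $a_n\to a$ in $\|\cdot\|_M$. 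For (v) I would simply unwind the definitions: given $a\in {\mathcal A}_M$, $f\in {\mathcal A}_M^*$ and a test vector $b\in {\mathcal A}$, both $\langle i^*(f\cdot a),b\rangle$ and $\langle i^*(f)\cdot a,b\rangle$ reduce to $\langle f,ab\rangle$, using that $ab\in {\mathcal A}$ and that $i^*$ is the restriction map; the identity $i^*(a\cdot f)=a\cdot i^*(f)$ is symmetric. For (iv), the estimate $|\langle f\cdot a,b\rangle|=|\langle f,ab\rangle|\le \|f\|\,\|a\|\,\|b\|_M$ for $b\in {\mathcal A}$ shows that $f\cdot a$ is $\|\cdot\|_M$-continuous on ${\mathcal A}$ and hence extends to ${\mathcal A}_M$, i.e. $f\cdot a\in i^*({\mathcal A}_M^*)$; the same estimate handles $a\cdot f$.

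For (i), I would fix $f\in {\mathcal A}^*$ and $a\in {\mathcal A}_M$ and approximate $a$ by $a_n\in {\mathcal A}$ in $\|\cdot\|_M$. Each $f\cdot a_n$ lies in ${\mathcal A}^*\cdot {\mathcal A}\subseteq {\rm LUC}({\mathcal A}^*)$, and the inequality $\|f\cdot a_n-f\cdot a\|_{{\mathcal A}^*}\le \|f\|\,\|a_n-a\|_M\to 0$ shows that $f\cdot a$ is a norm limit of elements of the closed space ${\rm LUC}({\mathcal A}^*)$, hence belongs to it. Part (ii) then follows by combining (i) and (v): the generators $g\cdot a$ of ${\rm LUC}({\mathcal A}_M^*)$ satisfy $i^*(g\cdot a)=i^*(g)\cdot a\in {\mathcal A}^*\cdot {\mathcal A}_M\subseteq {\rm LUC}({\mathcal A}^*)$ by (i), and since $i^*$ is linear and continuous it carries the closed linear span ${\rm LUC}({\mathcal A}_M^*)$ into the closed set ${\rm LUC}({\mathcal A}^*)$.

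Claim (iii) is the only place where the extra hypothesis enters. The inclusion ``$\subseteq$'' is exactly (i). For the reverse inclusion I would invoke the right-handed Cohen factorization theorem: because ${\mathcal A}_M$ has a bounded right approximate identity and ${\mathcal A}^*$ is a right Banach ${\mathcal A}_M$-module, the set of products ${\mathcal A}^*\cdot {\mathcal A}_M$ equals its own closed linear span $\overline{\langle {\mathcal A}^*\cdot {\mathcal A}_M\rangle}$. Since ${\mathcal A}\subseteq {\mathcal A}_M$ gives ${\rm LUC}({\mathcal A}^*)=\overline{\langle {\mathcal A}^*\cdot {\mathcal A}\rangle}\subseteq \overline{\langle {\mathcal A}^*\cdot {\mathcal A}_M\rangle}={\mathcal A}^*\cdot {\mathcal A}_M$, the two sets coincide.

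The step I expect to require the most care is the application of Cohen's theorem in (iii): one must check that it is a bounded \emph{right} approximate identity that matches the right module action $f\mapsto f\cdot a$, and that factorization is being asserted for the \emph{closed linear span} rather than the algebraic span. The only other delicate point is notational bookkeeping between the two norms $\|\cdot\|$ and $\|\cdot\|_M$ — in particular, verifying that the inequality $\|ab\|\le \|a\|_M\,\|b\|$ persists when $a$ is taken in the completion ${\mathcal A}_M$ rather than in ${\mathcal A}$, which is precisely what makes the module action of ${\mathcal A}_M$ on ${\mathcal A}^*$ well defined and bounded and underlies the estimates in (i) and (v).
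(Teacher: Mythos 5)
Your proposal is correct and follows essentially the same route as the paper: the same norm estimate $\|f\cdot a\|\le\|f\|\,\|a\|_M$ (resp. $\le\|f\|\,\|a\|$ against $\|b\|_M$) drives (i) and (iv), (ii) is deduced from (i) via the action of $i^*$ on generators, (iii) uses Cohen factorization exactly as the paper does, and (v) is the same unwinding of definitions. The only difference is cosmetic — you reorder the parts and make explicit the extension of $\|ab\|\le\|a\|_M\|b\|$ to $a\in{\mathcal A}_M$, which the paper uses tacitly.
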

\begin{proof}
{\rm (i)}. If $a \in {\mathcal A}_M$, then there is a sequence $(a_n) \subseteq {\mathcal A}$ such that $\Vert a_n - a\Vert_M \rightarrow 0$. It follows that for each $f \in {\mathcal A}^{*}$ and $b \in {\mathcal A}$, we have
\begin{eqnarray*}
\vert \langle f\cdot a_n - f\cdot a, b \rangle \vert &=&\vert \langle f\cdot (a_n - a), b \rangle \vert\\
&=&\vert \langle f, (a_n - a)b \rangle \vert\\
&\leq & \Vert f \Vert ~~\Vert a_n - a \Vert_M\Vert b \Vert,
\end{eqnarray*}
which implies that
$\Vert f\cdot a_n - f\cdot a\Vert \rightarrow 0$. 
{\rm (ii)}. This follows immediately from (i), since $i^*(f\cdot a) \in {\mathcal A}^*\cdot {\mathcal A}_M$ for all $ f \in {\mathcal A}^*_M$ and $a \in {\mathcal A}_M$.\\
{\rm (iii)}. Since ${\mathcal A}_M$ has a bounded right approximate identity, we obtain from the Cohen's Factorization Theorem and from(i) that ${\mathcal A}^*\cdot {\mathcal A}_M$ is a closed subspace of ${\rm LUC}({\mathcal A}^*)$. On the other hand, ${\mathcal A}^*\cdot {\mathcal A} \subseteq {\mathcal A}^*\cdot {\mathcal A}_M$. Thus, (iii) is true.\\
{\rm (iv)}. Given $f \in {\mathcal A}^*$ and $a\in {\mathcal A}$, define a linear functional on ${\mathcal A}_M$ by $\varphi_{f, a}(b)= \langle f, ab \rangle$ for all $b \in {\mathcal A}_M$. Then we have 
\begin{eqnarray*}
\Vert \varphi_{f, a} \Vert ={\rm sup}_{\Vert b \Vert_M \leq 1} \vert \varphi_{f, a}(b) \vert
\leq \Vert f \Vert\Vert a \Vert< \infty.
\end{eqnarray*}
Moreover, this linear functional agrees with $f\cdot a$ on ${\mathcal A}$ and $f\cdot a=i^*(\varphi_{f, a})$. Similarly, we can show that $a\cdot f \in i^*({\mathcal A}^{*}_M)$.
{\rm (v)}. Let $a\in {\mathcal A}_M$ and $f \in {\mathcal A}^*_M$. Then for each $ b \in {\mathcal A}$, we have
\begin{eqnarray*}
\langle i^*(f\cdot a), b \rangle &=& \langle f\cdot a, i(b) \rangle=\langle f, ai(b) \rangle\\
&=& \langle f, i(ab) \rangle=\langle i^*(f), ab \rangle\\
&=& \langle i^*(f)\cdot a, b \rangle.
\end{eqnarray*}
Thus, $i^*(f\cdot a)=i^*(f)\cdot a$. That $i^*(a\cdot f)=a\cdot i^*(f)$ is similar.
\end{proof}

For each $m\in {\mathcal A}^{**}_M$ define 
\begin{eqnarray*}
m_L : {\mathcal A}^* \rightarrow {\mathcal A}^*,\quad f \mapsto m.f,
\end{eqnarray*}
where the product $m\cdot f \in {\mathcal A}^*$ is given by $\langle m\cdot f, a \rangle=\langle m, f\cdot a \rangle$ for all $a \in {\mathcal A}$. Moreover, it is easy to see that $\Vert m_L \Vert \leq \Vert m \Vert$. For each $\varphi \in \Delta ({\mathcal A})$, we put 
$$I_{\varphi}:=\lbrace a \in {\mathcal A} : \quad \varphi(a)=0\rbrace,$$
the $\varphi$-augmentation ideal in ${\mathcal A}$ which has co-dimension one. 
\begin{theorem}\label{th:1}
Let ${\mathcal A}$ be a faithful Banach algebra with a topologically left invariant $\varphi $-mean of norm one and $\overline{\langle {\mathcal A}\cdot I_{\varphi}\rangle}= I_{\varphi}$. Then $i^{**}({\rm TLIM}_{\varphi}({\mathcal A}^{**}))\subseteq {\rm TLIM}_{\varphi}({\mathcal A}_M^{**})$. Furthermore, the map $i^{**}:{\rm TLIM}_{\varphi}({\mathcal A}^{**})\rightarrow {\rm TLIM}_{\varphi}({\mathcal A}_M^{**})$ is a bijection.
\end{theorem}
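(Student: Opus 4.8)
My plan is to prove the inclusion first and then upgrade it to a bijection by exhibiting an explicit (in fact forced) inverse formula.

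\emph{The inclusion.} Given $m\in\mathrm{TLIM}_\varphi(\mathcal A^{**})$, I would verify the three defining properties for $i^{**}(m)$. Since $\|i\|\le1$ we have $\|i^{**}(m)\|\le1$, while $\langle i^{**}(m),\varphi\rangle=\langle m,i^*(\varphi)\rangle=\langle m,\varphi\rangle=1$ gives simultaneously $i^{**}(m)(\varphi)=1$ and $\|i^{**}(m)\|=1$. For invariance, for $f\in\mathcal A_M^*$ and $a\in\mathcal A$ Proposition~\ref{p:1}(v) yields $\langle i^{**}(m),f\cdot a\rangle=\langle m,i^*(f)\cdot a\rangle=\varphi(a)\langle i^{**}(m),f\rangle$; I would then pass from $a\in\mathcal A$ to $a\in\mathcal A_M$ by density, using the estimate $\|i^*(f)\cdot c\|\le\|i^*(f)\|\,\|c\|_M$ together with continuity of $\varphi$ on $\mathcal A_M$.

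\emph{Injectivity and the forced formula.} Fix $a_0\in\mathcal A$ with $\varphi(a_0)=1$. For any $m\in\mathrm{TLIM}_\varphi(\mathcal A^{**})$ and $g\in\mathcal A^*$, invariance gives $\langle m,g\rangle=\langle m,g\cdot a_0\rangle$, and since $g\cdot a_0=i^*(\varphi_{g,a_0})$ by Proposition~\ref{p:1}(iv) this reads $\langle m,g\rangle=\langle i^{**}(m),\varphi_{g,a_0}\rangle$. Hence $m$ is completely recovered from $i^{**}(m)$ (so $i^{**}$ is injective), and any preimage of a given $n\in\mathrm{TLIM}_\varphi(\mathcal A_M^{**})$ is forced to be $\langle m,g\rangle:=\langle n,\varphi_{g,a_0}\rangle$.

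\emph{Surjectivity.} I would take this last formula as the definition of $m$ and check it does the job. The crucial lemma is that $\langle n,\varphi_{g,d}\rangle=0$ for every $d\in I_\varphi$. For $d=ae$ with $a\in\mathcal A$ and $e\in I_\varphi$, associativity gives $\varphi_{g,ae}=\varphi_{g\cdot a,e}$, and since $g\cdot a=i^*(\varphi_{g,a})$ Proposition~\ref{p:1}(v) identifies this with $\varphi_{g,a}\cdot e$; thus $\langle n,\varphi_{g,ae}\rangle=\varphi(e)\langle n,\varphi_{g,a}\rangle=0$. The estimate $\|\varphi_{g,d}\|\le\|g\|\,\|d\|$ then propagates this from the linear span of $\mathcal A\cdot I_\varphi$ to its norm closure, which is all of $I_\varphi$ by hypothesis, and this is the one place where the assumption $\overline{\langle\mathcal A\cdot I_\varphi\rangle}=I_\varphi$ enters. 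Granting the lemma, independence of the formula from $a_0$, invariance (via $\varphi_{g,ca_0}-\varphi(c)\varphi_{g,a_0}=\varphi_{g,\,ca_0-\varphi(c)a_0}$ with $ca_0-\varphi(c)a_0\in I_\varphi$), the normalization $m(\varphi)=\langle n,\varphi\rangle=1$, and the identity $i^{**}(m)=n$ (again via Proposition~\ref{p:1}(v)) all follow routinely.

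\emph{The main obstacle.} The genuinely delicate point is $\|m\|=1$. The bound $\|\varphi_{g,a_0}\|\le\|g\|\,\|a_0\|$ only yields $\|m\|\le\|a_0\|$, so no single choice of $a_0$ suffices. Here I would exploit that $m$ does not depend on $a_0$ and optimize: $\|m\|\le\inf\{\|a_0\|:a_0\in\mathcal A,\ \varphi(a_0)=1\}=\|\varphi\|^{-1}$. Finally, the standing hypothesis that $\mathcal A$ carries a norm-one $\mathrm{TLIM}_\varphi$ forces $\|\varphi\|=1$, since for such a mean $m_0$ one has $1=|m_0(\varphi)|\le\|\varphi\|\le1$; hence $\|m\|\le1$, and together with $\|m\|\ge|m(\varphi)|=1$ this gives $\|m\|=1$ and closes the argument.
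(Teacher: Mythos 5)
Your argument is correct, and its skeleton matches the paper's: the inclusion is proved exactly as in the paper (Proposition~\ref{p:1}(v) plus density of ${\mathcal A}$ in ${\mathcal A}_M$), and the heart of surjectivity in both proofs is the same key lemma --- a ${\rm TLIM}_\varphi$ on ${\mathcal A}_M^{**}$ annihilates $\varphi_{g,d}$ for $d\in I_\varphi$ (the paper phrases this as $m_L(f)|_{I_\varphi}=0$), which is the one place $\overline{\langle{\mathcal A}\cdot I_\varphi\rangle}=I_\varphi$ is used. Where you genuinely diverge is in how the preimage is assembled and normed. The paper takes an auxiliary $n\in{\mathcal A}^{**}$ with $\Vert n\Vert=n(\varphi)=1$, realizes it as a weak$^*$ limit of a net $(a_i)$ with $\Vert a_i\Vert\to1$, and defines the preimage as the Arens product $n\square m$; the bound $\Vert\tilde m\Vert\le1$ is then automatic, but the commutation $\langle m,f\cdot(ab)\rangle=\langle m,f\cdot(ba)\rangle$ is needed to push $a$ past the net. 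You instead use a single $a_0$ with $\varphi(a_0)=1$, prove the formula $\langle m,g\rangle=\langle n,\varphi_{g,a_0}\rangle$ is independent of $a_0$ (again by the key lemma), and recover $\Vert m\Vert\le1$ by optimizing $\Vert a_0\Vert$ down to $\Vert\varphi\Vert^{-1}$, noting that the standing norm-one hypothesis forces $\Vert\varphi\Vert=1$. This is more elementary --- no Goldstine, no nets, no Arens product --- and it isolates exactly how the norm-one hypothesis is used (only to get $\Vert\varphi\Vert=1$, which is equivalent to the existence of the paper's auxiliary $n$ by weak$^*$ compactness); the mild price is the extra independence-of-$a_0$ step, which the paper's single-shot construction avoids. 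Injectivity is handled the same way in both, via $\langle m,g\rangle=\langle i^{**}(m),\varphi_{g,a_0}\rangle$.
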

\begin{proof}
Let $ m \in {\rm TLIM}_{\varphi}({\mathcal A}^{**})$. Then $i^{**}(m)(\varphi)=1$ and $\|i^{**}(m)\|\leq 1$. Moreover, for each and $a \in {\mathcal A}_M$, there is a sequence $(a_n)$ in ${\mathcal A}$ such that $\Vert a_n - a\Vert_M \rightarrow 0$. Since $\Delta({\mathcal A})=\Delta({\mathcal A}_M)$, we obtain that
$$\varphi(a_n)\rightarrow \varphi(a),\quad \Vert f\cdot a_n - f\cdot a\Vert_{{\mathcal A}^*_M} \rightarrow 0$$ 
for all $f \in {\mathcal A}^*_M$. Therefore, 
\begin{eqnarray*}
\langle i^{**}(m), f\cdot a \rangle &=& \lim_{n \rightarrow \infty}\langle i^{**}(m), f\cdot a_n \rangle\\
&=& \lim_{n \rightarrow \infty}\langle m, i^*(f)\cdot {a_n} \rangle\\
&=& \lim_{n \rightarrow \infty}\varphi(a_n) \langle m, i^*(f)\rangle\\
&=& \varphi(a) \langle i^{**}(m), f \rangle.
\end{eqnarray*}
Hence, $i^{**}(m)\subseteq {\rm TLIM}_{\varphi}({\mathcal A}_M^{**})$ and $i^{**}$ is injective. We now prove that $i^{**}$ is surjective. Suppose that $m \in {\rm TLIM}_{\varphi}({\mathcal A}_M^{**})$. Then for each $ f \in {\mathcal A}^*$ and $a, b \in {\mathcal A}$, we have $f\cdot a \in i^*({\mathcal A}_M^*)$ by Proposition \ref{p:1} (iv), and so 
$$\langle m_L(f), ab \rangle=\langle m, (f\cdot a)\cdot b \rangle=\varphi(b)\langle m_L(f), a \rangle.$$
Thus,
$$\langle m_L(f), ab \rangle=0\quad (f \in {\mathcal A}^*, a\in {\mathcal A}, b \in I_{\varphi}).$$
Since $\overline{\langle {\mathcal A}\cdot I_{\varphi}\rangle}= I_{\varphi}$, it follows that $m_L(f)~|_{I_{\varphi}}=0$ for all $ f \in {\mathcal A}^*$.
As $ab-ba \in I_{\varphi}$, we get that 
$$\langle m, f\cdot (ab) \rangle=\langle m, f\cdot (ba) \rangle \quad (a, b \in {\mathcal A}).$$
Let $n \in {\mathcal A}^{**}$ be such that $\Vert n \Vert =n(\varphi)=1$. Then there is a net $(a_i)$ in ${\mathcal A}$ such that $a_i \overset{w^*}{\longrightarrow}n$ and $\Vert a_i \Vert \rightarrow 1$.
Consider the functional $\tilde{m} \in {\mathcal A}^{**}$ defined by 
$$\tilde{m}(f):= \langle n \square m, f \rangle=\langle n, m.f \rangle \quad (f \in {\mathcal A}^*).$$
By Proposition \ref{p:1} (iv), the functional $\tilde{m}$ is well defined. From the above we conclude that 
\begin{eqnarray*}
\tilde{m}(f\cdot a) &=& \langle n \square m, f\cdot a \rangle=\langle n, m\cdot (f\cdot a) \rangle \\
&=& \langle n, (m\cdot f)\cdot a \rangle=\lim_{i \rightarrow \infty}\langle a_i, (m\cdot f)\cdot a \rangle\\
&=& \lim_{i \rightarrow \infty}\langle m\cdot f, aa_i\rangle=\lim_{i \rightarrow \infty}\langle m\cdot f, a_ia \rangle\\
&=& \lim_{i \rightarrow \infty}\langle m, (f\cdot a_i)\cdot a\rangle=\lim_{i \rightarrow \infty}\varphi(a) \langle m, f\cdot a_i \rangle\\
&=& \varphi(a) \lim_{i \rightarrow \infty} \langle a_i \square m, f \rangle=\varphi(a) \langle n \square m, f \rangle\\ 
&=& \varphi(a) \tilde{m} (f) \quad ( f \in {\mathcal A}^*, a \in {\mathcal A}).
\end{eqnarray*}
Moreover, $\Vert \tilde{m} \Vert \leq 1$ and 
\begin{eqnarray*}
\tilde{m}(\varphi) &=& \langle n \square m,\varphi \rangle=\langle n, m\cdot \varphi \rangle \\
&=& \lim_{i \rightarrow \infty}\langle a_i, m\cdot \varphi \rangle=\lim_{i \rightarrow \infty}\langle m\cdot \varphi, a_i \rangle\\
&=& \lim_{i \rightarrow \infty}\langle m, \varphi\cdot a_i \rangle=\lim_{i \rightarrow \infty}\varphi(a
_i) \langle m, \varphi \rangle\\
&=& m(\varphi) =1.
\end{eqnarray*}
Hence, $\Vert \tilde{m} \Vert = 1$; that means, $\tilde{m} \in {\rm TILM}_{\varphi}({\mathcal A}^{**})$.
\begin{eqnarray*}
\langle i^{**}(\tilde{m}), f \rangle &=& \langle \tilde{m}, i^*(f) \rangle=\langle n \square m, i^*(f) \rangle\\
&=& \langle n, m\cdot i^*(f) \rangle=\lim_{i \rightarrow \infty}\langle a_i, m\cdot i^*(f) \rangle\\
&=& \lim_{i \rightarrow \infty}\langle m\cdot i^*(f), a_i \rangle=\lim_{i \rightarrow \infty} \langle m, i^*(f)\cdot a_i \rangle\\
&=& \lim_{i \rightarrow \infty} \varphi(a_i) \langle m, i^*(f) \rangle =\langle m, f \rangle \quad ( f \in {\mathcal A}_M^*).
\end{eqnarray*}
This shows that $i^{**}(\tilde{m})=m$.
\end{proof}

For each $ n \in {\rm LUC}({\mathcal A}^*)^*$, define a bounded linear map $n_L$ from ${\mathcal A}^*$ into ${\mathcal A}^*$ by 
$ \langle n_L(f), a \rangle= \langle n, f.a \rangle$ for all $f \in {\mathcal A}^*$ and $a \in {\mathcal A}$.
\begin{theorem}\label{th:2}
Let ${\mathcal A}$ be a faithful Banach algebra with a topologically left invariant  $\varphi $-mean of norm one and $\overline{\langle {\mathcal A}\cdot I_{\varphi}\rangle}= I_{\varphi}$. Then the restriction map $R:{\rm TLIM}_{\varphi}({\mathcal A}^{**})\rightarrow {\rm TLIM}_{\varphi}({\rm LUC}({\mathcal A}^*)^*)$ is a bijection.
\end{theorem}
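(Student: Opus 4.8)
The plan is to mirror the proof of Theorem~\ref{th:1}, the key structural fact being that ${\rm LUC}({\mathcal A}^*)=\overline{\langle{\mathcal A}^*\cdot{\mathcal A}\rangle}$ is a norm-closed right ${\mathcal A}$-submodule of ${\mathcal A}^*$ that contains $\varphi$ (indeed $\varphi\cdot a=\varphi(a)\varphi$, and $\varphi\neq 0$). To see $R$ is well defined, take $m\in{\rm TLIM}_{\varphi}({\mathcal A}^{**})$. Since $f\cdot a\in{\rm LUC}({\mathcal A}^*)$ whenever $f\in{\rm LUC}({\mathcal A}^*)$ and $a\in{\mathcal A}$, the restriction $R(m)=m|_{{\rm LUC}({\mathcal A}^*)}$ inherits the invariance identity verbatim and satisfies $R(m)(\varphi)=1$. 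Restriction does not increase the norm, so $\|R(m)\|\le 1$; combined with $R(m)(\varphi)=1$ and $\|\varphi\|\le 1$ this forces $\|R(m)\|=1$, whence $R(m)\in{\rm TLIM}_{\varphi}({\rm LUC}({\mathcal A}^*)^*)$.

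Injectivity requires nothing beyond $\varphi\neq 0$. Fix $e\in{\mathcal A}$ with $\varphi(e)=1$. For arbitrary $f\in{\mathcal A}^*$ we have $f\cdot e\in{\rm LUC}({\mathcal A}^*)$, and the invariance of $m$ on all of ${\mathcal A}^*$ gives $\langle m,f\rangle=\varphi(e)\langle m,f\rangle=\langle m,f\cdot e\rangle=\langle R(m),f\cdot e\rangle$. Thus $m$ is completely recovered from $R(m)$, so $R$ is one-to-one.

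For surjectivity, let $u\in{\rm TLIM}_{\varphi}({\rm LUC}({\mathcal A}^*)^*)$ and consider the operator $u_L\colon{\mathcal A}^*\to{\mathcal A}^*$ introduced just before the theorem. The decisive preliminary step is to show $u_L(f)|_{I_{\varphi}}=0$ for every $f\in{\mathcal A}^*$. Indeed, since $f\cdot a\in{\rm LUC}({\mathcal A}^*)$, the invariance of $u$ yields $\langle u_L(f),ab\rangle=\langle u,(f\cdot a)\cdot b\rangle=\varphi(b)\langle u_L(f),a\rangle$, so $u_L(f)$ annihilates $\langle{\mathcal A}\cdot I_{\varphi}\rangle$ and hence all of $I_{\varphi}=\overline{\langle{\mathcal A}\cdot I_{\varphi}\rangle}$; this is precisely where the hypothesis is used. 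Now pick $n\in{\mathcal A}^{**}$ with $\|n\|=n(\varphi)=1$ and a net $(a_i)\subseteq{\mathcal A}$ with $a_i\overset{w^*}{\longrightarrow}n$ and $\|a_i\|\to 1$, and define $\tilde m(f):=\langle n,u_L(f)\rangle=\lim_i\langle u,f\cdot a_i\rangle$. This representation gives $\|\tilde m\|\le\|u\|\le 1$ at once, while $\tilde m(\varphi)=\lim_i\varphi(a_i)\,u(\varphi)=n(\varphi)=1$, so $\|\tilde m\|=1$.

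The heart of the argument, and the step I expect to be most delicate, is the $\varphi$-invariance of $\tilde m$, where the annihilation property is spent a second time. Because $\varphi(aa_i-a_ia)=0$ we have $aa_i-a_ia\in I_{\varphi}$, whence $\langle u,f\cdot(aa_i)\rangle=\langle u,f\cdot(a_ia)\rangle$ for every $i$; coupling this with $\langle u,f\cdot(a_ia)\rangle=\langle u,(f\cdot a_i)\cdot a\rangle=\varphi(a)\langle u,f\cdot a_i\rangle$ (invariance of $u$, legitimate since $f\cdot a_i\in{\rm LUC}({\mathcal A}^*)$) gives $\tilde m(f\cdot a)=\lim_i\langle u,f\cdot(aa_i)\rangle=\varphi(a)\tilde m(f)$. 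Hence $\tilde m\in{\rm TLIM}_{\varphi}({\mathcal A}^{**})$. Finally, for $g\in{\rm LUC}({\mathcal A}^*)$ the invariance of $u$ applies to $g$ directly, giving $\tilde m(g)=\lim_i\varphi(a_i)\langle u,g\rangle=u(g)$, so $R(\tilde m)=u$ and $R$ is onto. The only real care needed is the twofold appeal to $\overline{\langle{\mathcal A}\cdot I_{\varphi}\rangle}=I_{\varphi}$.
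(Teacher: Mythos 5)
Your proof is correct and follows essentially the same route as the paper: the surjectivity argument via $\tilde m(f)=\langle n,u_L(f)\rangle$, the annihilation of $I_\varphi$ by $u_L(f)$ from the hypothesis $\overline{\langle{\mathcal A}\cdot I_\varphi\rangle}=I_\varphi$, and the resulting commutation $\langle u, f\cdot(aa_i)\rangle=\langle u, f\cdot(a_ia)\rangle$ are exactly the paper's steps. You merely spell out the well-definedness and injectivity in more detail than the paper, which dismisses them as clear.
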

\begin{proof}
It is clear that the restriction map $R$ is well-defined and injective. We now claim that $R$ is surjective.
Let $n \in {\rm TLIM}_{\varphi}({\rm LUC}({\mathcal A}^*)^*)$. Define the functional $\tilde{n} \in {\mathcal A}^{**}$ by 
$$\tilde{n}(f):=\langle m , n_L(f) \rangle \quad (f \in {\mathcal A}^*),$$ where $ m \in {\mathcal A}^{**}$ with $\Vert m \Vert = m(\varphi)=1$. 
Let $(a_i)$ be a bounded net in ${\mathcal A}$ such that $a_i \overset{w^*}{\longrightarrow}m$.
First note that if $f \in {\mathcal A}^*$ and $a, b \in {\mathcal A}$, then 
$$ \langle n_L(f), ab \rangle= \langle n, (f\cdot a)\cdot b \rangle=\varphi(b)\langle n_L(f), a \rangle.$$
Therefore, 
$$ \langle n_L(f), ab \rangle=0 \quad (f \in {\mathcal A}^*, a \in {\mathcal A}, b \in I_{\varphi}).$$
Since we have $\overline{\langle {\mathcal A}\cdot I_{\varphi}\rangle}=I_{\varphi}$, it follows that $n_L(f)|_{I_{\varphi}}=0$ for all $f \in {\mathcal A}^*$. As $ab-ba \in I_{\varphi}$ for all $a, b \in {\mathcal A}$, we get that 
$$ \langle n_L(f), ab \rangle=\langle n_L(f), ba \rangle \quad (a, b \in {\mathcal A}).$$ 
Hence,
\begin{eqnarray*}
\tilde{n}(f\cdot a)&=&\langle m, n_L(f\cdot a) \rangle
= \lim_{i \rightarrow \infty} \langle a_i, n_L(f)\cdot a \rangle\\
&=& \lim_{i \rightarrow \infty}\langle n_L(f), aa_i \rangle =\lim_{i \rightarrow \infty} \langle n_L(f), a_ia \rangle\\
&=& \lim_{i \rightarrow \infty}\langle n, (f\cdot a_i)\cdot a \rangle=\lim_{i \rightarrow \infty} \varphi (a)\langle n, f\cdot a_i \rangle\\
&=& \lim_{i \rightarrow \infty} \varphi (a) \langle a_i, n_L(f) \rangle = \varphi (a)\tilde{n}(f).
\end{eqnarray*}
Furthermore, $\Vert \tilde{n} \Vert \leq 1$ and 
\begin{eqnarray*}
\tilde{n}(\varphi)=m(\varphi)n(\varphi)=1.
\end{eqnarray*}
Thus, $\Vert \tilde{n} \Vert =1$ and $\tilde{n} \in {\rm TLIM}_{\varphi}({\mathcal A}^{**})$.
Finally, if $f \in {\rm LUC}({\mathcal A}^*)$, then 
\begin{eqnarray*}
\tilde{n}(f)&=& \langle m, n_L(f) \rangle
= \lim_{i \rightarrow \infty} \langle n_L(f), a_i \rangle\\
&=& \lim_{i \rightarrow \infty}\langle n, f\cdot a_i \rangle=\lim_{i \rightarrow \infty} \varphi(a_i)\langle n, f \rangle\\
&=& n(f).
\end{eqnarray*}
This shows that $R(\tilde{n})=n$. Hence, $R$ is surjective.
\end{proof}
\begin{lemma}\label{lem:1}
Let ${\mathcal A}$ be a faithful Banach algebra satisfying $\overline{\langle {\mathcal A}\cdot{\mathcal A}\rangle}= {\mathcal A}$. Then $f\cdot a \in {\rm LUC}({\mathcal A}^*_M)$, for all $f \in {\mathcal A}^*$ and $a \in {\mathcal A}$. Moreover, $\Vert f\cdot a \Vert_{{\mathcal A}^*_M} \leq \Vert f \Vert ~~\Vert a \Vert$ and ${\rm LUC}({\mathcal A}^*_M)=\overline{\langle {\mathcal A}^*\cdot {\mathcal A}\rangle}^{\Vert \cdot \Vert_{{\mathcal A}^*_M}}$.
\end{lemma}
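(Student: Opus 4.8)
The plan is to read the norm estimate directly off Proposition \ref{p:1}, then establish the membership $f\cdot a\in{\rm LUC}(\mathcal{A}^*_M)$ by a factorization-and-density argument that is exactly where the hypothesis $\overline{\langle\mathcal{A}\cdot\mathcal{A}\rangle}=\mathcal{A}$ enters, and finally deduce the description of ${\rm LUC}(\mathcal{A}^*_M)$ by proving two inclusions, both measured in $\|\cdot\|_{\mathcal{A}^*_M}$.

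For the norm estimate I would argue as follows. By Proposition \ref{p:1}(iv), for $f\in\mathcal{A}^*$ and $a\in\mathcal{A}$ the functional $\varphi_{f,a}\in\mathcal{A}^*_M$ defined by $\varphi_{f,a}(b)=\langle f,ab\rangle$ satisfies $i^*(\varphi_{f,a})=f\cdot a$ and $\|\varphi_{f,a}\|\leq\|f\|\,\|a\|$. Under the identification of $\mathcal{A}^*_M$ with a subset of $\mathcal{A}^*$ through $i^*$, the element $f\cdot a$ is precisely $\varphi_{f,a}$, so $\|f\cdot a\|_{\mathcal{A}^*_M}=\|\varphi_{f,a}\|\leq\|f\|\,\|a\|$, which is the second assertion.

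For the membership I would use the right $\mathcal{A}_M$-module identity $f\cdot(cd)=(f\cdot c)\cdot d$ ($c,d\in\mathcal{A}$), which is immediate from the module action. When $a=cd$, Proposition \ref{p:1}(iv) places $f\cdot c$ in $i^*(\mathcal{A}^*_M)$, say $f\cdot c=i^*(g)$ with $g\in\mathcal{A}^*_M$, and then Proposition \ref{p:1}(v) gives $(f\cdot c)\cdot d=i^*(g)\cdot d=i^*(g\cdot d)$, so $f\cdot(cd)$ corresponds to $g\cdot d\in\mathcal{A}^*_M\cdot\mathcal{A}_M$. By linearity the same holds for any $a\in\langle\mathcal{A}\cdot\mathcal{A}\rangle$, i.e. $f\cdot a$ lies in $\langle\mathcal{A}^*_M\cdot\mathcal{A}_M\rangle$ under the identification. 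Now I invoke $\overline{\langle\mathcal{A}\cdot\mathcal{A}\rangle}=\mathcal{A}$: choosing $a_n\in\langle\mathcal{A}\cdot\mathcal{A}\rangle$ with $\|a_n-a\|\to 0$, the estimate just proved yields $\|f\cdot a_n-f\cdot a\|_{\mathcal{A}^*_M}\leq\|f\|\,\|a_n-a\|\to 0$, so $f\cdot a$ is a $\|\cdot\|_{\mathcal{A}^*_M}$-limit of elements of $\langle\mathcal{A}^*_M\cdot\mathcal{A}_M\rangle$ and hence lies in ${\rm LUC}(\mathcal{A}^*_M)$. This is the first assertion, and the density hypothesis is used precisely to pass from products to arbitrary $a$. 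For the final identity, the inclusion $\overline{\langle\mathcal{A}^*\cdot\mathcal{A}\rangle}\subseteq{\rm LUC}(\mathcal{A}^*_M)$ is immediate from the first assertion and the $\|\cdot\|_{\mathcal{A}^*_M}$-closedness of ${\rm LUC}(\mathcal{A}^*_M)$. For the reverse inclusion it suffices to treat a generator $g\cdot c$ with $g\in\mathcal{A}^*_M$, $c\in\mathcal{A}_M$: approximating $c$ by $c_n\in\mathcal{A}$ with $\|c_n-c\|_M\to 0$ and using contractivity of the $\mathcal{A}_M$-module action on $\mathcal{A}^*_M$ gives $\|g\cdot c_n-g\cdot c\|_{\mathcal{A}^*_M}\leq\|g\|_{\mathcal{A}^*_M}\|c_n-c\|_M\to 0$, while Proposition \ref{p:1}(v) identifies $g\cdot c_n$ with $i^*(g)\cdot c_n\in\langle\mathcal{A}^*\cdot\mathcal{A}\rangle$. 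Hence $g\cdot c\in\overline{\langle\mathcal{A}^*\cdot\mathcal{A}\rangle}^{\|\cdot\|_{\mathcal{A}^*_M}}$, giving ${\rm LUC}(\mathcal{A}^*_M)=\overline{\langle\mathcal{A}^*_M\cdot\mathcal{A}_M\rangle}\subseteq\overline{\langle\mathcal{A}^*\cdot\mathcal{A}\rangle}$.

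I expect the only real difficulty to be bookkeeping: keeping track of which functionals live in $\mathcal{A}^*$ and which in $\mathcal{A}^*_M$, and using Proposition \ref{p:1}(iv)--(v) to move cleanly between $f\cdot a$, regarded in $\mathcal{A}^*$, and its preimage $\varphi_{f,a}$ in $\mathcal{A}^*_M$. In particular one must remember that the identification of $\mathcal{A}^*_M$ with a subset of $\mathcal{A}^*$ via $i^*$ is only set-theoretic---the two spaces carry the dual norms of $\|\cdot\|_M$ and $\|\cdot\|$, which genuinely differ---so every convergence claim must be made with the correct $\|\cdot\|_{\mathcal{A}^*_M}$-norm. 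Beyond the density hypothesis $\overline{\langle\mathcal{A}\cdot\mathcal{A}\rangle}=\mathcal{A}$ and the contractivity of the module actions, no new idea is needed.
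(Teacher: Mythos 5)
Your proof is correct and follows essentially the same route as the paper's: the norm estimate from Proposition \ref{p:1}(iv), membership of $f\cdot a$ in ${\rm LUC}({\mathcal A}^*_M)$ by approximating $a$ in $\Vert\cdot\Vert$ by elements of $\langle {\mathcal A}\cdot{\mathcal A}\rangle$, and the two inclusions for the final identity via density of ${\mathcal A}$ in ${\mathcal A}_M$. The only difference is that you spell out, via the factorization $f\cdot(cd)=(f\cdot c)\cdot d$ and Proposition \ref{p:1}(iv)--(v), why $f\cdot b$ lies in ${\rm LUC}({\mathcal A}^*_M)$ when $b$ is a sum of products --- a step the paper leaves implicit --- which is a welcome clarification rather than a departure.
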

\begin{proof}
By Proposition \ref{p:1}(iv), $f\cdot a \in i^*({\mathcal A}^*_M)$ for all $f \in {\mathcal A}^*$ and $a \in {\mathcal A}.$ In addition, we can easily check that $\Vert f\cdot a \Vert_{{\mathcal A}^*_M} \leq \Vert f \Vert  \Vert a \Vert$.
Now, we have to show that $f.a \in {\rm LUC}({\mathcal A}^*_M)$.
Given that $\overline{\langle {\mathcal A}\cdot{\mathcal A}\rangle}= {\mathcal A}$, for each $\epsilon >0$ there exist $a_i, b_i \in {\mathcal A}(i=1, \ldots, n)$ such that
$$\Vert \sum_{i=1}^n a_ib_i-a \Vert < \epsilon.$$
Put $b= \sum_{i=1}^n a_ib_i$. Then $f\cdot b \in {\rm LUC}({\mathcal A}^*_M)$.
Therefore,
$$\Vert f\cdot b-f\cdot a \Vert_{{\mathcal A}^*_M} \leq \Vert f \Vert \Vert b-a \Vert < \epsilon \Vert f \Vert.$$
This shows that $f\cdot a \in {\rm LUC}({\mathcal A}^*_M)$ and we have 
$$\overline{\langle {\mathcal A}^*\cdot {\mathcal A}\rangle}^{\Vert \cdot \Vert_{{\mathcal A}^*_M}}\subseteq {\rm LUC}({\mathcal A}^*_M).$$
To prove the reverse inclusion, for given $a \in {\mathcal A}_M$, there exists a sequence $(a_i) $ in ${\mathcal A}$ for which $\Vert a_i - a \Vert_M \rightarrow 0$. Thus, $\Vert f\cdot a_i - f\cdot a \Vert_{{\mathcal A}^*_M} \rightarrow 0$ for all $ f \in {\mathcal A}^*_M$. It follows that 
\begin{eqnarray*}
{\rm LUC}({\mathcal A}^*_M) &=& \overline{\langle {\mathcal A}_M^*\cdot {\mathcal A}_M\rangle}^{\Vert \cdot \Vert_{{\mathcal A}^*_M}}\\
&=& \overline{\langle {\mathcal A}_M^*\cdot {\mathcal A}\rangle}^{\Vert \cdot \Vert_{{\mathcal A}^*_M}}\\
&\subseteq & \overline{\langle {\mathcal A}^*\cdot {\mathcal A}\rangle}^{\Vert \cdot \Vert_{{\mathcal A}^*_M}}.
\end{eqnarray*}
This implies that
${\rm LUC}({\mathcal A}^*_M) = \overline{\langle {\mathcal A}^*\cdot {\mathcal A}\rangle}^{\Vert \cdot \Vert_{{\mathcal A}^*_M}}.$
\end{proof}
\begin{remark}
Let ${\mathcal A}$ be a Banach algebra satisfying $\overline{\langle {\mathcal A}\cdot I_\varphi\rangle}= I_\varphi$. Then $\overline{\langle {\mathcal A}\cdot {\mathcal A}\rangle}= {\mathcal A}$. In fact, if we choose $a_0\in {\mathcal A}$ such that $\varphi(a_0)=1$, then ${\mathcal A}=I_\varphi\oplus {\Bbb C}a_0$. Thus, there is $0\neq\lambda\in {\Bbb C}$ and $a_1\in I_\varphi$ such that
$a_0^2=a_1+\lambda a_0$. This implies that $a_0\in \overline{\langle {\mathcal A}\cdot {\mathcal A}\rangle}$ and so $\overline{\langle {\mathcal A}\cdot {\mathcal A}\rangle}= {\mathcal A}$.
\end{remark}
\begin{theorem}\label{th:3}
Let ${\mathcal A}$ be a faithful Banach algebra with a topologically left invariant  $\varphi $-mean of norm one satisfying $\overline{\langle {\mathcal A}\cdot I_{\varphi}\rangle}= I_{\varphi}$. Then we have 
$i^{**}({\rm TLIM}_{\varphi}({\rm LUC}({\mathcal A}^*)^*)) \subseteq {\rm TLIM}_{\varphi}({\rm LUC}({\mathcal A}^{*}_M)^*)$. Furthermore, the map 
$i^{**}:{\rm TLIM}_{\varphi}({\rm LUC}({\mathcal A}^*)^*) \rightarrow {\rm TLIM}_{\varphi}({\rm LUC}({\mathcal A}^{*}_M)^*)$ 
is a bijection. 
\end{theorem}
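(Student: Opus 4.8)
The plan is to realise the asserted map as the bottom edge of a commutative square whose three other edges are already known to be bijections, thereby reducing everything to Theorems \ref{th:1} and \ref{th:2}. Write $R:{\rm TLIM}_{\varphi}({\mathcal A}^{**})\to {\rm TLIM}_{\varphi}({\rm LUC}({\mathcal A}^*)^*)$ for the restriction map of Theorem \ref{th:2} (a bijection, since the hypotheses of that theorem are exactly what we assume here), and let $R_M:{\rm TLIM}_{\varphi}({\mathcal A}_M^{**})\to {\rm TLIM}_{\varphi}({\rm LUC}({\mathcal A}_M^*)^*)$ be the analogous restriction map for the algebra ${\mathcal A}_M$. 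By Proposition \ref{p:1}(ii) the restriction $i^*$ maps ${\rm LUC}({\mathcal A}_M^*)$ into ${\rm LUC}({\mathcal A}^*)$, and I take the bottom edge $i^{**}:{\rm LUC}({\mathcal A}^*)^*\to {\rm LUC}({\mathcal A}_M^*)^*$ to be the adjoint of this inclusion, i.e. $\langle i^{**}(n),g\rangle=\langle n,i^*(g)\rangle$ for $n\in {\rm LUC}({\mathcal A}^*)^*$ and $g\in {\rm LUC}({\mathcal A}_M^*)$. The top edge $i^{**}:{\mathcal A}^{**}\to {\mathcal A}_M^{**}$ is a bijection on ${\rm TLIM}_{\varphi}$ by Theorem \ref{th:1}.

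The first real task is to check that Theorem \ref{th:2} may legitimately be applied to ${\mathcal A}_M$, so that $R_M$ is a bijection. Faithfulness of ${\mathcal A}_M$ follows from the isometric embedding of ${\mathcal A}_M$ into the double multiplier algebra of ${\mathcal A}$, together with the double-multiplier identity and faithfulness of ${\mathcal A}$ (so that $L_x=0$ forces $R_x=0$ and conversely). The existence of a norm-one topologically left invariant $\varphi$-mean on ${\mathcal A}_M^{**}$ is immediate from Theorem \ref{th:1}, which carries the given one on ${\mathcal A}^{**}$ into ${\rm TLIM}_{\varphi}({\mathcal A}_M^{**})$. The delicate hypothesis is $\overline{\langle {\mathcal A}_M\cdot I_\varphi({\mathcal A}_M)\rangle}=I_\varphi({\mathcal A}_M)$, where $I_\varphi({\mathcal A}_M)=\{a\in {\mathcal A}_M:\varphi(a)=0\}$. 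For this I would first show that $I_\varphi$ is $\|\cdot\|_M$-dense in $I_\varphi({\mathcal A}_M)$: given $x\in I_\varphi({\mathcal A}_M)$ choose $x_n\in {\mathcal A}$ with $\|x_n-x\|_M\to 0$, fix $a_0\in {\mathcal A}$ with $\varphi(a_0)=1$, and note that $x_n-\varphi(x_n)a_0\in I_\varphi$ still converges to $x$ in $\|\cdot\|_M$ because $\varphi(x_n)\to\varphi(x)=0$. Since $\|\cdot\|_M\le\|\cdot\|$, the hypothesis $I_\varphi=\overline{\langle {\mathcal A}\cdot I_\varphi\rangle}^{\|\cdot\|}$ gives $I_\varphi\subseteq\overline{\langle {\mathcal A}\cdot I_\varphi\rangle}^{\|\cdot\|_M}\subseteq\overline{\langle {\mathcal A}_M\cdot I_\varphi({\mathcal A}_M)\rangle}^{\|\cdot\|_M}$, and taking $\|\cdot\|_M$-closures together with the density just proved yields $I_\varphi({\mathcal A}_M)\subseteq\overline{\langle {\mathcal A}_M\cdot I_\varphi({\mathcal A}_M)\rangle}^{\|\cdot\|_M}$; the reverse inclusion is immediate.

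With the three edges $R$, $R_M$ and the top $i^{**}$ all bijective, it remains only to check that the square commutes. This is a direct unwinding: for $m\in {\rm TLIM}_{\varphi}({\mathcal A}^{**})$ and $g\in {\rm LUC}({\mathcal A}_M^*)$ one has $\langle R_M(i^{**}(m)),g\rangle=\langle i^{**}(m),g\rangle=\langle m,i^*(g)\rangle$, while $\langle i^{**}(R(m)),g\rangle=\langle R(m),i^*(g)\rangle=\langle m,i^*(g)\rangle$, the last step using $i^*(g)\in {\rm LUC}({\mathcal A}^*)$ from Proposition \ref{p:1}(ii). Hence the bottom edge equals $R_M\circ i^{**}\circ R^{-1}$, a composite of bijections; this proves at once the inclusion $i^{**}({\rm TLIM}_{\varphi}({\rm LUC}({\mathcal A}^*)^*))\subseteq {\rm TLIM}_{\varphi}({\rm LUC}({\mathcal A}_M^*)^*)$ and that $i^{**}$ is a bijection between these two sets.

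I expect the main obstacle to be precisely the transfer of the hypotheses of Theorem \ref{th:2} to ${\mathcal A}_M$, since the entire reduction is vacuous unless $R_M$ is known to be a bijection; within that, the $\|\cdot\|_M$-density of $I_\varphi$ in $I_\varphi({\mathcal A}_M)$ (and hence the augmentation-ideal identity for ${\mathcal A}_M$) is the one genuinely non-formal point. By contrast, the commutativity of the square, the module-norm estimate of Lemma \ref{lem:1}, and the fact that $\varphi$ lies in both ${\rm LUC}$-spaces are all routine, as is the injectivity built into $R$ and $R_M$, namely that a mean is recovered from its ${\rm LUC}$-restriction via $\langle m,f\rangle=\langle m,f\cdot a_0\rangle$ with $\varphi(a_0)=1$.
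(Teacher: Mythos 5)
Your argument is correct, but it is not the route the paper takes. The paper proves Theorem \ref{th:3} by re-running the construction of Theorem \ref{th:1} inside the ${\rm LUC}$-setting (building $\tilde m=n\,\square\, m$ and checking invariance by hand), with Lemma \ref{lem:1} supplying the fact that $f\cdot a\in {\rm LUC}({\mathcal A}_M^*)$ for $f\in{\mathcal A}^*$ and $a\in{\mathcal A}$; you instead realise the map as $R_M\circ i^{**}\circ R^{-1}$, a composite of the bijections of Theorems \ref{th:1} and \ref{th:2}, so that nothing has to be recomputed. You correctly isolate the one non-formal step in this reduction, namely that ${\mathcal A}_M$ itself satisfies the hypotheses of Theorem \ref{th:2}, and your verification is sound: faithfulness of ${\mathcal A}_M$ via its isometric embedding into the multiplier algebra of ${\mathcal A}$ together with the identity $a\,L_x(b)=R_x(a)\,b$; existence of a norm-one ${\rm TLIM}_\varphi$ on ${\mathcal A}_M^{**}$ directly from Theorem \ref{th:1}; and the $\|\cdot\|_M$-density of $I_\varphi$ in $I_\varphi({\mathcal A}_M)$ (using $x_n-\varphi(x_n)a_0$ and the $\|\cdot\|_M$-continuity of $\varphi$, which the paper's identification $\Delta({\mathcal A})\cong\Delta({\mathcal A}_M)$ guarantees), which combined with $\|\cdot\|_M\leq\|\cdot\|$ yields $\overline{\langle{\mathcal A}_M\cdot I_\varphi({\mathcal A}_M)\rangle}=I_\varphi({\mathcal A}_M)$. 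The commutativity of the square is indeed trivial, being associativity of restriction, and it holds on all of ${\mathcal A}^{**}$, so both the inclusion and the bijectivity follow at once. What your approach buys is modularity: Lemma \ref{lem:1} is not needed, and the transported hypotheses for ${\mathcal A}_M$ are facts of independent interest. What the paper's approach buys is that it never has to establish those properties of ${\mathcal A}_M$ at all, since it works only with functionals restricted along $i^*$.
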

\begin{proof}
By Proposition \ref{p:1}(ii), we have ${\rm LUC}({\mathcal A}^*)^*\subseteq {\rm LUC}({\mathcal A}^{*}_M)^*$. Using Lemma \ref{lem:1} and by simple modifications of the arguments
used in the proof of Theorem \ref{th:1}, we can prove this.
\end{proof}

As an immediate consequence of Theorems \ref{th:1}, \ref{th:2} and \ref{th:3} we obtain the following result on the cardinality of the sets of topologically left invariant $\varphi$-means with norm one. 
\begin{corollary}\label{cor:1}
Let ${\mathcal A}$ be a faithful Banach algebra  satisfying $\overline{\langle {\mathcal A}\cdot I_{\varphi}\rangle}= I_{\varphi}$. Then we have
\begin{eqnarray*}
\vert {\rm TLIM}_{\varphi}({\rm LUC}({\mathcal A} ^*_M)^*)\vert &=& \vert {\rm TLIM}_{\varphi}({\rm LUC}({\mathcal A} ^*)^*)\vert\\
&=& \vert {\rm TLIM}_{\varphi}({\mathcal A} ^{**})\vert\\
&=& \vert {\rm TLIM}_{\varphi}({\mathcal A} ^{**}_M)\vert.
\end{eqnarray*}
\end{corollary}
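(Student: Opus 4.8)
The plan is to deduce all three displayed equalities by composing the bijections already supplied by Theorems \ref{th:1}, \ref{th:2} and \ref{th:3}. The hypothesis $\overline{\langle {\mathcal A}\cdot I_{\varphi}\rangle}= I_{\varphi}$ of the corollary is precisely the standing structural assumption common to those three theorems; moreover, by the Remark it entails $\overline{\langle {\mathcal A}\cdot {\mathcal A}\rangle}= {\mathcal A}$, so that Lemma \ref{lem:1}, which underlies Theorem \ref{th:3}, is at our disposal. The only feature separating the corollary from the three theorems is that each of the latter also posits a norm-one topologically left invariant $\varphi$-mean on ${\mathcal A}^{**}$, whereas the corollary does not. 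I would therefore organise the argument around the dichotomy ``${\rm TLIM}_{\varphi}({\mathcal A}^{**})$ is, or is not, empty''.

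Suppose first that ${\rm TLIM}_{\varphi}({\mathcal A}^{**})\neq\varnothing$. Then ${\mathcal A}$ carries a norm-one ${\rm TLIM}_{\varphi}$, all three theorems apply verbatim, and they yield bijections
\begin{gather*}
i^{**}\colon {\rm TLIM}_{\varphi}({\mathcal A}^{**})\longrightarrow {\rm TLIM}_{\varphi}({\mathcal A}^{**}_M),\\
R\colon {\rm TLIM}_{\varphi}({\mathcal A}^{**})\longrightarrow {\rm TLIM}_{\varphi}({\rm LUC}({\mathcal A}^*)^*),\\
i^{**}\colon {\rm TLIM}_{\varphi}({\rm LUC}({\mathcal A}^*)^*)\longrightarrow {\rm TLIM}_{\varphi}({\rm LUC}({\mathcal A}^*_M)^*).
\end{gather*}
Chaining these places all four sets in bijection with ${\rm TLIM}_{\varphi}({\mathcal A}^{**})$, and since a bijection preserves cardinality the three equalities follow immediately.

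It remains to treat the case ${\rm TLIM}_{\varphi}({\mathcal A}^{**})=\varnothing$, and this is where I expect the only real difficulty. Here one would like all four sets to be empty, so that every cardinality is $0$ and the equalities hold vacuously; the natural route is to run the surjectivity halves of the three theorems backwards, since each manufactures an element of ${\rm TLIM}_{\varphi}({\mathcal A}^{**})$ (routed through Theorem \ref{th:2} in the two ``${\rm LUC}$'' cases) from a mean in the target space, so that non-emptiness would propagate back to ${\mathcal A}^{**}$ and contradict the hypothesis. The subtlety is that these constructions require a fixed norm-one $m\in{\mathcal A}^{**}$ with $m(\varphi)=1$, which exists exactly when $\|\varphi\|_{{\mathcal A}^*}=1$; and from $\|a\|_M\leq\|a\|$ one obtains only $\|\varphi\|_{{\mathcal A}^*}\leq\|\varphi\|_{{\mathcal A}_M^*}\leq 1$, possibly with strict inequality, so backward propagation is not automatic from the structural hypothesis alone. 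I would therefore close this case under the normalisation $\|\varphi\|=1$ — automatic for the characters arising on the Fourier and hypergroup algebras that motivate the paper — under which a norm-one mean on any of the larger duals does force one on ${\mathcal A}^{**}$; with that normalisation in place the dichotomy closes and the chain of bijections transports cardinality in both regimes.
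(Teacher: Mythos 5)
Your argument in the non-empty case is exactly the paper's proof: the paper offers no more than the single sentence ``as an immediate consequence of Theorems \ref{th:1}, \ref{th:2} and \ref{th:3}'', i.e.\ it chains the three bijections precisely as you do. Where you go beyond the paper is in worrying about the case ${\rm TLIM}_{\varphi}({\mathcal A}^{**})=\varnothing$, and your worry is legitimate: the corollary as printed silently drops the hypothesis ``with a topologically left invariant $\varphi$-mean of norm one'' that all three theorems carry, and the surjectivity halves of those theorems genuinely need a norm-one $n\in{\mathcal A}^{**}$ with $n(\varphi)=1$, i.e.\ $\Vert\varphi\Vert_{{\mathcal A}^*}=1$, to push a mean back down to ${\mathcal A}^{**}$. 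Without that normalisation the statement can actually fail: take ${\mathcal A}={\Bbb C}$ with the norm $\Vert z\Vert=2|z|$ and $\varphi={\rm id}$; then $I_\varphi=\{0\}$ so the structural hypothesis holds, ${\mathcal A}_M$ is ${\Bbb C}$ with its usual norm, $\Vert\varphi\Vert_{{\mathcal A}^*}=1/2$, and one checks $\vert{\rm TLIM}_{\varphi}({\mathcal A}^{**})\vert=0$ while $\vert{\rm TLIM}_{\varphi}({\mathcal A}^{**}_M)\vert=1$. So your dichotomy plus the normalisation $\Vert\varphi\Vert=1$ (or, equivalently, retaining the theorems' existence hypothesis) is not pedantry but a necessary repair; it is also harmless for the paper's applications, where $\varphi_{\dot e}$ is a state on $VN(H)$ and ${\rm TLIM}_{\varphi_{\dot e}}(VN(H))$ is nonempty to begin with. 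In short: same route as the paper where the paper has a route, and a correct diagnosis of the one point the paper leaves unaddressed.
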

\begin{remark}
An inspection of the arguments
used in the proof of the above results, one can show that
these results are correct, replacing ${\rm TLIM}_\varphi$ by ${\rm TLI}_\varphi$.
\end{remark} 

Recall that a Banach algebra ${\mathcal A}$ is an ideal in $({\mathcal A}^{**}, \square)$ if and only if the operators 
$l_a : b\mapsto ab$ and
$ r_a : b\mapsto ba$ are both weakly compact for each $a\in {\mathcal A}$; see \cite[Proposition 2.6.25]{Dale}.
\begin{proposition}\label{p3}
Let ${\mathcal A}$ be a faithful Banach algebra with the condition that $\overline{\langle {\mathcal A}\cdot {\mathcal A}\rangle}= {\mathcal A}$. Then ${\mathcal A}_{M}$ is an ideal in its second dual if and only if ${\mathcal A}$ is an ideal in its second dual.
\end{proposition}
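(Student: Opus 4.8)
The plan is to reduce everything to the weak-compactness criterion recalled just above the statement: a Banach algebra ${\mathcal B}$ is an ideal in $({\mathcal B}^{**},\square)$ precisely when, for every $a\in {\mathcal B}$, both multiplication operators $l_a$ and $r_a$ are weakly compact on ${\mathcal B}$. So the task becomes comparing weak compactness of the multiplication operators on ${\mathcal A}$ with those on ${\mathcal A}_M$. The bridge is a pair of auxiliary maps. For $a\in {\mathcal A}$, since ${\mathcal A}$ is a two-sided ideal in ${\mathcal A}_M$, the assignments $T_a: b\mapsto ab$ and $S_a: b\mapsto ba$ carry ${\mathcal A}_M$ into ${\mathcal A}$; first I would check, via the closed graph theorem (using $\|\cdot\|_M\le\|\cdot\|$ on ${\mathcal A}$ together with joint continuity of the product of ${\mathcal A}_M$), that $T_a,S_a:{\mathcal A}_M\to ({\mathcal A},\|\cdot\|)$ are bounded. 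Writing $i:{\mathcal A}\to {\mathcal A}_M$ for the inclusion, one then has the factorizations $l_a^{\mathcal A}=T_a\circ i$ and $l_a^{{\mathcal A}_M}=i\circ T_a$ (and similarly for $r_a$ with $S_a$), which are the identities that let me move weak compactness between the two algebras. I shall also use repeatedly that the weakly compact operators form a closed operator ideal: a composite is weakly compact as soon as one factor is, finite sums stay weakly compact, and the class is closed in operator norm.

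For the implication ``${\mathcal A}_M$ an ideal $\Rightarrow {\mathcal A}$ an ideal'', fix $a\in {\mathcal A}$. By hypothesis $a$ lies in $\overline{\langle {\mathcal A}\cdot{\mathcal A}\rangle}$, so I first treat $a=\sum_{j}a_jc_j\in \langle {\mathcal A}\cdot{\mathcal A}\rangle$. Expanding $l_a^{\mathcal A}(b)=\sum_j a_j(c_jb)$ and routing $b$ through ${\mathcal A}_M$ gives $l_a^{\mathcal A}=\sum_j T_{a_j}\circ l_{c_j}^{{\mathcal A}_M}\circ i$. Here each $l_{c_j}^{{\mathcal A}_M}$ is weakly compact because ${\mathcal A}_M$ is an ideal in its bidual, and it is sandwiched between the bounded maps $i$ and $T_{a_j}$; hence $l_a^{\mathcal A}$ is weakly compact for every $a\in \langle {\mathcal A}\cdot{\mathcal A}\rangle$. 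To pass to an arbitrary $a\in {\mathcal A}$ I approximate it in $\|\cdot\|$ by elements of $\langle {\mathcal A}\cdot{\mathcal A}\rangle$; since $\|l_a^{\mathcal A}\|_{op}\le\|a\|$, the corresponding operators converge in operator norm, and closedness of the weakly compact operators finishes it. The operators $r_a^{\mathcal A}$ are handled identically using $S_{c_j}$ and $r_{c_j}^{{\mathcal A}_M}$, so ${\mathcal A}$ is an ideal in ${\mathcal A}^{**}$.

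The reverse implication is the mirror image. Assuming each $l_a^{\mathcal A}$ and $r_a^{\mathcal A}$ $(a\in {\mathcal A})$ is weakly compact, I again factor $a=\sum_j a_jc_j\in\langle {\mathcal A}\cdot{\mathcal A}\rangle$ and obtain $l_a^{{\mathcal A}_M}=\sum_j i\circ l_{a_j}^{\mathcal A}\circ T_{c_j}$; now the weakly compact factors are the $l_{a_j}^{\mathcal A}$, so $l_a^{{\mathcal A}_M}$ is weakly compact for $a\in\langle {\mathcal A}\cdot{\mathcal A}\rangle$, and hence for all $a\in {\mathcal A}$ after the same operator-norm approximation. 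Finally, to reach every $c\in {\mathcal A}_M$ I use that $c\mapsto l_c^{{\mathcal A}_M}$ is norm-decreasing, $\|l_c^{{\mathcal A}_M}\|_{op}\le\|c\|_M$, and that ${\mathcal A}$ is $\|\cdot\|_M$-dense in ${\mathcal A}_M$: approximating $c$ by elements of ${\mathcal A}$ yields operator-norm convergence of weakly compact operators, whence $l_c^{{\mathcal A}_M}$, and likewise $r_c^{{\mathcal A}_M}$, is weakly compact. Thus ${\mathcal A}_M$ is an ideal in ${\mathcal A}_M^{**}$.

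The step I expect to be delicate is exactly the transfer of weak compactness through the inclusion $i$: because $\|\cdot\|_M$ is genuinely weaker than $\|\cdot\|$, the map $i$ is not bounded below, the unit balls of ${\mathcal A}$ and ${\mathcal A}_M$ are incomparable, and relative weak compactness of a set in ${\mathcal A}_M$ need not descend to ${\mathcal A}$. The hypothesis $\overline{\langle {\mathcal A}\cdot{\mathcal A}\rangle}={\mathcal A}$ is precisely what circumvents this: factoring an element as a product lets me insert a multiplication operator that is weakly compact on one of the two algebras, while the accompanying bounded ideal map (an outer $T_{a_j}$ in one direction, an inner $T_{c_j}$ in the other) delivers the output back into the correct space, so the weak compactness is never required to cross $i$ in the unfavorable direction.
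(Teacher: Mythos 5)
Your proof is correct and follows essentially the same route as the paper: both reduce to the weak-compactness criterion for $l_a$ and $r_a$, factor elements of $\langle {\mathcal A}\cdot{\mathcal A}\rangle$ as products so that the weakly compact factor lives on the right algebra while the ideal property of ${\mathcal A}$ in ${\mathcal A}_M$ routes the output back, and finish by density and norm-closedness of the weakly compact operators. The only difference is presentational — you phrase the transfer via the operator-ideal property and explicit bounded factorizations through $T_a$, $S_a$ and $i$, whereas the paper argues with bounded sequences and weakly convergent subsequences — but the underlying mechanism is identical.
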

\begin{proof}
First suppose that ${\mathcal A}_{M}$ is an ideal in its second dual. Then for every $a\in {\mathcal A}_{M}$, the operator $l_a$, defined as above, is weakly compact on ${\mathcal A}_{M}$. Suppose now that $d=bc$ for some $b,c\in {\mathcal A}$ and let $(a_n)$ be a bounded sequence in ${\mathcal A}$. Then $(a_n)$ is also bounded in ${\mathcal A}_{M}$. Hence, there exists a subsequence $(a_{n_j})$ of $(a_n)$ such that $(ca_{n_j})$ is weakly convergent in ${\mathcal A}_M$. Since ${\mathcal A}^*\cdot {\mathcal A}\subseteq i^*({\mathcal A}_M^*)$, it follows that $(bca_{n_j})$ converges weakly in ${\mathcal A}$. This means $l_d$ is weakly compact on ${\mathcal A}$. By density of 
${\langle {\mathcal A}\cdot {\mathcal A}\rangle}$ in ${\mathcal A}$, we conclude that $l_d$ is weakly compact on ${\mathcal A}$ for all $d\in {\mathcal A}$. Similarly, $r_d$ is weakly compact on ${\mathcal A}$. This shows that ${\mathcal A}$ is an ideal in its second dual.
We next suppose that ${\mathcal A}$ is an ideal in its second dual. Then for every $a\in {\mathcal A}$, $l_a$ is weakly compact on ${\mathcal A}$. Let $d=bc$ for some $b,c\in {\mathcal A}$ and let $(a_n)$ be a bounded sequence in ${\mathcal A}_M$. Then $(c{a_n})$ is a bounded sequence in ${\mathcal A}$ and so there is a subsequence $(c{a_{n_j}})$ of $(c{a_n})$ such that the sequence $(bc{a_{n_j}})$ is weakly convergent in ${\mathcal A}$. Since $i^*({\mathcal A}_M^*)\subseteq {\mathcal A}^*$, it follows that $(da_{n_j})$ is weakly convergent in ${\mathcal A}_M$. By density of $\langle {\mathcal A}\cdot {\mathcal A}\rangle$ in ${\mathcal A}_M$, we obtain that the operator $l_d$ is weakly compact on ${\mathcal A}_M$ for all $d\in {\mathcal A}_M$. Similarly, for every $d\in {\mathcal A}_M$, the operator $r_d$ is also weakly compact on ${\mathcal A}_M$. Thus, ${\mathcal A}_M$ is an ideal in ${\mathcal A}_M^{**}$.
\end{proof}
We must recall some additional concepts. Following \cite{Kan2}, a functional $m\in {\mathcal A}^{**}$ is called a topologically two-sided invariant $\varphi $-mean if $m(\varphi)=1$ and for each $f\in {\mathcal A}^*$ and
$a\in {\mathcal A}$ we have not only $\langle m, f\cdot a\rangle=\varphi(a)\langle m, f\rangle$, but also $\langle m, a\cdot f\rangle=\varphi(a)\langle m, f\rangle$. ${\mathcal A}$ is called two-sided $\varphi $-amenable if there exists a a topologically two-sided invariant $\varphi $-mean on ${\mathcal A}^*$. 
\begin{proposition}\label{p:4}
 Let ${\mathcal A}$ be a Banach algebra. If there is a topologically left invariant $\varphi $-mean $m$ and a topologically right invariant $\varphi $-mean $n$ on $\rm{WAP}({\mathcal A}^*)$, then $m=n$. In particular, if ${\mathcal A}$ is two-sided $\varphi $-amenable, then there is a unique topologically two-sided invariant $\varphi $-mean on $\rm{WAP}({\mathcal A}^*)$.
\end{proposition}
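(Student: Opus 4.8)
The plan is to work inside the Banach algebra $\rm{WAP}({\mathcal A}^*)^*$ and to exploit the one feature that distinguishes $\rm{WAP}({\mathcal A}^*)$ from a general introverted subspace, namely that the two Arens products $\square$ and $\diamondsuit$ coincide there. First I would record that $\varphi\in\rm{WAP}({\mathcal A}^*)$: since $a\cdot\varphi=\varphi(a)\varphi$, the set $\lbrace a\cdot\varphi:\|a\|\le 1\rbrace$ lies in the one–dimensional space ${\Bbb C}\varphi$ and is bounded, hence relatively weakly compact. Thus the hypotheses are meaningful, and because $\rm{WAP}({\mathcal A}^*)$ is a two–sided introverted submodule of ${\mathcal A}^*$, both $\square$ and $\diamondsuit$ are defined on $X^*:=\rm{WAP}({\mathcal A}^*)^*$ and every module product appearing below remains in $X:=\rm{WAP}({\mathcal A}^*)$.

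The heart of the argument is to convert each invariance condition into an absorption identity in $X^*$. For the left invariant mean $m$ and any $f\in X$, the definition of the left Arens product gives $\langle m\cdot f, a\rangle=\langle m, f\cdot a\rangle=\varphi(a)\langle m, f\rangle$, so that $m\cdot f=\langle m,f\rangle\,\varphi$. Feeding this into $\langle h\square m, f\rangle=\langle h, m\cdot f\rangle$ yields
\[
h\square m=h(\varphi)\,m \qquad (h\in X^*).
\]
Symmetrically, for the right invariant mean $n$ one computes $\langle f\cdot n, a\rangle=\langle n, a\cdot f\rangle=\varphi(a)\langle n,f\rangle$, hence $f\cdot n=\langle n,f\rangle\,\varphi$, and the definition of the right Arens product gives
\[
n\diamondsuit h=h(\varphi)\,n \qquad (h\in X^*).
\]

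I would then combine these. Taking $h=n$ in the first identity and using $n(\varphi)=1$ gives $n\square m=m$; taking $h=m$ in the second identity and using $m(\varphi)=1$ gives $n\diamondsuit m=n$. Since $\square$ and $\diamondsuit$ agree on $\rm{WAP}({\mathcal A}^*)$ we have $n\square m=n\diamondsuit m$, whence $m=n$. For the ``in particular'' statement, two–sided $\varphi$–amenability of ${\mathcal A}$ supplies a topologically two–sided invariant $\varphi$–mean on ${\mathcal A}^*$; restricting it to $\rm{WAP}({\mathcal A}^*)$ produces such a mean there (the submodule property keeps $f\cdot a$ and $a\cdot f$ in $\rm{WAP}({\mathcal A}^*)$ and $\varphi$ is fixed), giving existence, while uniqueness is immediate because the first part already forces any left invariant mean to coincide with any right invariant one.

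The step I expect to be the main obstacle is the clean justification that $\square=\diamondsuit$ on $\rm{WAP}({\mathcal A}^*)^*$ together with the two–sided introversion of the weakly almost periodic functionals, which is what guarantees that $m\cdot f$ and $f\cdot n$ stay inside $\rm{WAP}({\mathcal A}^*)$; without these the two absorption identities cannot be matched up. Everything else is a short, formal manipulation of the Arens products once $\varphi$ has been shown to be weakly almost periodic.
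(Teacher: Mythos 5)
Your proof is correct and follows essentially the same route as the paper's: both reduce to the absorption identities $n\square m=n(\varphi)m$ and $n\diamondsuit m=m(\varphi)n$ and then invoke the coincidence of the two Arens products on ${\rm WAP}({\mathcal A}^*)$ (which the paper cites from Dales--Lau) to conclude $m=n$. Your version merely makes explicit the intermediate computations $m\cdot f=\langle m,f\rangle\,\varphi$ and $f\cdot n=\langle n,f\rangle\,\varphi$ and the membership $\varphi\in{\rm WAP}({\mathcal A}^*)$ that the paper leaves implicit.
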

 \begin{proof}
 Suppose that there is a topologically left invariant $\varphi $-mean $m$ and a topologically right invariant $\varphi $-mean $n$ on $\rm{WAP}({\mathcal A}^*)$.  By \cite [Proposition 3.11]{Dale2}, the left and the right Arens products coincide on $\rm{WAP}({\mathcal A}^*)$. Thus, topological invariance implies that
 $$m=n(\varphi )m=n\square m=n\lozenge m=m(\varphi )n=n.$$
 Let $m$ be a topologically two-sided invariant $\varphi$-mean on ${\mathcal A}^{*}$ and suppose that $n$ is another topologically two-sided invariant $\varphi $-mean on $\rm{WAP}({\mathcal A}^*)$. It follows from above that $m=n$ when restricted to $\rm{WAP}({\mathcal A}^*)$.
That is, there is a unique topologically two-sided invariant $\varphi $-mean on $\rm{WAP}({\mathcal A}^*)$. 
\end{proof}
A Banach algebra ${\mathcal A}$ is called weakly sequentially complete if every weakly Cauchy sequence in ${\mathcal A}$
is weakly convergent. It is well-known that the predual of any von Neumann algebra is weakly sequentially complete; see \cite{Ta}. In particular, $L^1(G)$ and ${\mathcal A}(G)$, the group algebra and the Fourier algebra of locally compact group $G$, are weakly sequentially complete.
\begin{corollary}\label{cor:3}
Let ${\mathcal A}$ be a weakly sequentially complete, separable and faithful Banach algebra satisfying $\overline{\langle {\mathcal A}\cdot I_{\varphi}\rangle}= I_{\varphi}$, and suppose that ${\mathcal A}$ is two-sided $\varphi $-amenable. If ${\mathcal A}_M$ is Arens regular, then there is a unique topologically two-sided invariant $\varphi $-mean in ${\mathcal A}$.
\end{corollary}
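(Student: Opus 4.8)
The plan is to reduce everything to Proposition \ref{p:4} by pushing the problem over to ${\mathcal A}_M$, where Arens regularity is available. Existence is immediate: by hypothesis ${\mathcal A}$ is two-sided $\varphi$-amenable, so there is a topologically two-sided invariant $\varphi$-mean $m$ on ${\mathcal A}^*$. Only uniqueness needs an argument, and the idea is that $m$ is pinned down by its image $i^{**}(m)$ on ${\mathcal A}_M^*$, while on ${\mathcal A}_M^*$ Arens regularity leaves room for only one invariant mean.

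First I would transfer invariance to ${\mathcal A}_M$. The forward computation in the proof of Theorem \ref{th:1} shows that $i^{**}(m)$ is topologically left invariant on ${\mathcal A}_M^*$; repeating it with the right action $a\cdot f$ in place of $f\cdot a$ --- using the right-handed analogues of Proposition \ref{p:1}(i) and (v) --- shows that $i^{**}(m)$ is also topologically right invariant. Hence $i^{**}(m)$ is a topologically two-sided invariant $\varphi$-mean on ${\mathcal A}_M^*$. I would also record the injectivity of $i^{**}$ on invariant means, which is clean: fixing $a\in{\mathcal A}$ with $\varphi(a)=1$, for any ${\rm TLI}_\varphi$-mean $m$ and any $f\in{\mathcal A}^*$ we have $\langle m,f\rangle=\langle m,f\cdot a\rangle$ with $f\cdot a\in i^*({\mathcal A}_M^*)$ by Proposition \ref{p:1}(iv), so $\langle m,f\rangle$ is determined by $i^{**}(m)$. (These two points are exactly the forward direction and the injectivity of Theorem \ref{th:1}, valid for ${\rm TLI}_\varphi$-means by the Remark following Corollary \ref{cor:1}; the surjectivity half of Theorem \ref{th:1} is not needed here.)

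Next I would invoke Arens regularity of ${\mathcal A}_M$, which gives ${\rm WAP}({\mathcal A}_M^*)={\mathcal A}_M^*$. Proposition \ref{p:4}, applied to ${\mathcal A}_M$, then says that any topologically left invariant $\varphi$-mean on ${\mathcal A}_M^*$ coincides with any topologically right invariant one; taking the right-invariant mean to be $i^{**}(m)$, this makes $i^{**}(m)$ the only topologically left invariant $\varphi$-mean on ${\mathcal A}_M^*$. Now if $m'$ is any topologically two-sided invariant $\varphi$-mean on ${\mathcal A}^*$, then $i^{**}(m')$ is topologically left invariant on ${\mathcal A}_M^*$, so $i^{**}(m')=i^{**}(m)$, and injectivity forces $m'=m$. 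Thus $m$ is the unique topologically two-sided invariant $\varphi$-mean on ${\mathcal A}^*$.

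I expect the only delicate bookkeeping to be the two preparatory facts of the second paragraph: that the correspondence $i^{**}$ of Theorem \ref{th:1} respects \emph{two-sided} (not merely left) invariance, which forces one to spell out the right-handed analogues of Proposition \ref{p:1}, and the standard identification of Arens regularity with ${\rm WAP}({\mathcal A}_M^*)={\mathcal A}_M^*$. No deep estimate seems to be required. It is worth noting that this route uses only two-sided $\varphi$-amenability, Arens regularity of ${\mathcal A}_M$, Proposition \ref{p:4}, and the injectivity in Theorem \ref{th:1}; the separability and weak sequential completeness hypotheses do not appear to enter this particular deduction and are presumably present for the hypergroup applications of Section 4.
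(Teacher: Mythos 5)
Your reduction to ${\mathcal A}_M$ is sound as far as it goes: pushing a two-sided invariant $\varphi$-mean forward along $i^{**}$, invoking ${\rm WAP}({\mathcal A}_M^*)={\mathcal A}_M^*$ together with Proposition \ref{p:4}, and pulling uniqueness back via the injectivity of $i^{**}$ on invariant means correctly yields that there is a unique topologically two-sided invariant $\varphi$-mean \emph{on} ${\mathcal A}^*$, i.e.\ a unique such element of ${\mathcal A}^{**}$. This matches the first half of the paper's proof, which routes the same transfer through the two-sided version of Corollary \ref{cor:1}.

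However, the corollary asserts more: the unique mean lies \emph{in} ${\mathcal A}$, that is, it belongs to the canonical image of ${\mathcal A}$ in ${\mathcal A}^{**}$. (This is the reading the paper intends throughout; compare the later example, where the nonexistence of an invariant mean \emph{in} $A(H)$ for non-discrete $H$ is quoted from \cite{Sh1}.) Your proposal never addresses this, and your closing remark that separability and weak sequential completeness ``do not appear to enter'' is precisely the symptom of the gap: those two hypotheses carry the final step. The paper uses separability, via \cite[Proposition 3.6]{Kan2}, to produce a bounded \emph{sequence} $(a_n)$ in ${\mathcal A}$ with $\varphi(a_n)=1$ and $\Vert aa_n-\varphi(a)a_n\Vert\rightarrow 0$, $\Vert a_na-\varphi(a)a_n\Vert\rightarrow 0$. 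Every $w^*$-cluster point of $(a_n)$ in ${\mathcal A}^{**}$ is then a topologically two-sided invariant $\varphi$-mean, so by the uniqueness you established they all coincide; hence $(a_n)$ is $w^*$-convergent, in particular weakly Cauchy, and weak sequential completeness places its limit in ${\mathcal A}$ itself. That limit is the unique invariant mean, now an element of ${\mathcal A}$. Without this last argument the statement as written is not proved; what you have established is only the uniqueness in ${\mathcal A}^{**}$.
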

\begin{proof}
Arens regularity of ${\mathcal A}_M$ implies that $\rm{WAP}({\mathcal A}_M^{*})={\mathcal A}_M^{*}$. By Proposition \ref{p:4} and two-sided version of Corollary \ref{cor:1}, we obtain that there is a unique topologically two-sided invariant $\varphi $-mean on ${\mathcal A}_M^{*}$. Again by two-sided version of Corollary \ref{cor:1}, there is a unique topologically two-sided invariant $\varphi $-mean on ${\mathcal A}^*$. Since $A$ is separable, there is a bounded sequence $(a_n )$ in ${\mathcal A}$ with $\varphi (a_n)=1$ for all $n $ and for each $a\in {\mathcal A}$, 
$$\Vert a{a_n }-\varphi (a){a_n }\Vert \rightarrow 0,\quad\Vert {a_n}a-\varphi (a){a_n }\Vert \rightarrow 0;$$ see \cite [Proposition 3.6]{Kan2}. By uniqueness of topologically two-sided invariant $\varphi $-means on ${\mathcal A}^*$, we conclude that any two $w^*$-cluster points of $(a_n)$ are equal. Since ${\mathcal A}$ is weakly sequentially complete, there exists a $w^*$-cluster point of $(a_n)$ in ${\mathcal A}$ itself, which is the unique topologically two-sided invariant $\varphi $-mean. 
\end{proof} 
\begin{corollary}
Let ${\mathcal A}$ be a weakly sequentially complete and separable Banach algebra and suppose that ${\mathcal A}$ is two-sided $\varphi $-amenable such that ${\rm LUC}({\mathcal A}^*)\subseteq \rm{WAP}({\mathcal A}^{*})$ and $\overline{\langle {\mathcal A}\cdot I_{\varphi}\rangle}= I_{\varphi}$. Then there is a unique topologically two-sided invariant $\varphi $-mean in ${\mathcal A}$.
\end{corollary}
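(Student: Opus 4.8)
The plan is to follow the template of Corollary \ref{cor:3}, but to replace the detour through ${\mathcal A}_M$ (where Arens regularity together with Corollary \ref{cor:1} forced uniqueness) by a direct argument on ${\mathcal A}^*$ that feeds on the hypothesis ${\rm LUC}({\mathcal A}^*)\subseteq {\rm WAP}({\mathcal A}^*)$. Existence of a topologically two-sided invariant $\varphi$-mean on ${\mathcal A}^*$ is immediate from two-sided $\varphi$-amenability, so the two things to establish are: (a) such a mean is \emph{unique} on all of ${\mathcal A}^*$, and (b) it can be represented by an element of ${\mathcal A}$ itself.

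For (a), I would first invoke Proposition \ref{p:4}, which already yields a unique topologically two-sided invariant $\varphi$-mean on ${\rm WAP}({\mathcal A}^*)$. Given any two topologically two-sided invariant $\varphi$-means $m_1,m_2$ on ${\mathcal A}^*$, their restrictions to ${\rm WAP}({\mathcal A}^*)$ are again topologically two-sided invariant $\varphi$-means (here $\varphi\in {\rm WAP}({\mathcal A}^*)$ and ${\rm WAP}({\mathcal A}^*)$ is a two-sided submodule), hence coincide by Proposition \ref{p:4}. Since ${\rm LUC}({\mathcal A}^*)\subseteq {\rm WAP}({\mathcal A}^*)$, this forces $m_1=m_2$ on ${\rm LUC}({\mathcal A}^*)$. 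To lift this to ${\mathcal A}^*$ I would use the injectivity of the restriction map ${\mathcal A}^{**}\to {\rm LUC}({\mathcal A}^*)^*$ from Theorem \ref{th:2}: fixing $a_0\in {\mathcal A}$ with $\varphi(a_0)=1$, invariance gives $\langle m_i,f\rangle=\langle m_i,f\cdot a_0\rangle$ for every $f\in {\mathcal A}^*$, while $f\cdot a_0\in {\rm LUC}({\mathcal A}^*)$, so each $m_i$ is completely determined by its restriction to ${\rm LUC}({\mathcal A}^*)$; hence $m_1=m_2$. This yields the desired uniqueness on ${\mathcal A}^*$.

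For (b), I would argue exactly as in Corollary \ref{cor:3}. Separability together with two-sided $\varphi$-amenability (and the hypothesis $\overline{\langle {\mathcal A}\cdot I_\varphi\rangle}=I_\varphi$) furnishes, via \cite[Proposition 3.6]{Kan2}, a bounded \emph{sequence} $(a_n)$ in ${\mathcal A}$ with $\varphi(a_n)=1$ and $\|aa_n-\varphi(a)a_n\|\to 0$, $\|a_na-\varphi(a)a_n\|\to 0$ for all $a\in {\mathcal A}$. A direct computation (using $\langle a_n,f\cdot a\rangle=\langle f,aa_n\rangle$ and the analogous right identity) shows that every $w^*$-cluster point of $(a_n)$ in ${\mathcal A}^{**}$ is a topologically two-sided invariant $\varphi$-mean, so by the uniqueness from (a) all such cluster points coincide. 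I would then deduce that $(a_n)$ is weakly Cauchy in ${\mathcal A}$ and appeal to weak sequential completeness to obtain a weak limit $a_\infty\in {\mathcal A}$; this $a_\infty$ is the unique topologically two-sided invariant $\varphi$-mean, now realised inside ${\mathcal A}$.

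I expect the main obstacle to be the passage from uniqueness on ${\rm WAP}({\mathcal A}^*)$ to uniqueness on all of ${\mathcal A}^*$ in step (a): the inclusion ${\rm LUC}({\mathcal A}^*)\subseteq {\rm WAP}({\mathcal A}^*)$ only transfers the agreement of the two means to the subspace ${\rm LUC}({\mathcal A}^*)$, and one must still argue that a topologically left invariant $\varphi$-mean is recovered from its values there. The secondary delicate point is the weak-Cauchy deduction in (b): if $(a_n)$ failed to be weakly Cauchy, some $f\in {\mathcal A}^*$ would separate two subsequences, and extracting $w^*$-cluster points along them would produce two invariant $\varphi$-means with different values on $f$, contradicting the uniqueness established in (a).
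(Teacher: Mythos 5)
Your proposal is correct and follows essentially the same route as the paper: Proposition \ref{p:4} gives uniqueness on ${\rm WAP}({\mathcal A}^*)$, the inclusion ${\rm LUC}({\mathcal A}^*)\subseteq {\rm WAP}({\mathcal A}^*)$ together with the injectivity of the restriction ${\mathcal A}^{**}\to{\rm LUC}({\mathcal A}^*)^*$ (the two-sided version of Corollary \ref{cor:1}/Theorem \ref{th:2}) transfers uniqueness to ${\mathcal A}^*$, and the separability plus weak sequential completeness argument of Corollary \ref{cor:3} places the mean inside ${\mathcal A}$. You merely spell out details the paper leaves implicit (the $f\mapsto f\cdot a_0$ injectivity argument and the weak-Cauchy deduction), which is fine.
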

\begin{proof}
Since ${\mathcal A}$ is two-sided $\varphi $-amenable, there is a unique topologically two-sided invariant $\varphi $-mean on $\rm{WAP}({\mathcal A}^*)$, by Proposition \ref{p:4}. Moreover, ${\rm LUC}({\mathcal A}^*)\subseteq \rm{WAP}({\mathcal A}^{*})$. Therefore, it follows from two-sided version of Corollary \ref{cor:1}, that there is a unique topologically two-sided invariant $\varphi $-mean on ${\mathcal A}^*$. Similarly proof of Corollary \ref{cor:3}, we conclude that there is a unique topologically two-sided invariant $\varphi $-mean in ${\mathcal A}$. 
\end{proof}

\section{Applications to hypergroup algebras}

A locally compact Hausdorff space $H$ is a hypergroup if there is a convolution product, denoted by $\ast $, defined on $M(H)$, the space of bounded Radon measures on $H$, with which several general conditions are satisfied. We refer the reader to \cite{Bl} for more details on hypergroups. Suppose that $dx $ is a left Haar measure on $H$ and let $\dot{x}\mapsto \overline{\dot{x}}$ be an involution of $H$. The convolution product on hypergroup algebra $L^1(H)$ is naturally defined to make it a Banach algebra. Define $\varphi _{1}:L^1(H)\rightarrow {\Bbb C}$ by $\varphi _{1}(f)={\int}_H f(\dot{x})~d\dot{x}$ for all $f\in L^1(H)$. It is not hard to show that $\varphi _{1}$ is the identity of the Von Neumann algebra $L^{\infty }(H)$ such that $\varphi _{1} \in \Delta (L^{1}(H))$ and induces $L^1(H)$ to a Lau algebra \cite{L83}. 
Note that a locally compact hypergroup $H$ is amenable if there exists a topologically left invariant $\varphi _{1}$-mean with norm one on $L^{\infty }(H)$ \cite{S}. In what follows, $H$ will always be a hypergroup with a fixed left Haar measure 
$d\dot{x}$. Let $G(H)=\lbrace \dot{x}\in H:\quad \delta _{\dot{x}} \ast \delta _{\overline{\dot{x}}}= \delta _{\overline{\dot{x}}} \ast \delta _{\dot{x}}= \delta _{\dot{e}}\rbrace $. Then $G(H)$ is a closed subhypergroup of $H$ and a locally compact group \cite[10.4C] {J}.
We put $${\rm LUC}(H):={\rm LUC}(L^{\infty}(H)).$$
\begin{example}
Let $H$ be a non compact amenable hypergroup such that the maximal subgroup $G(H)$ is open. Then by \cite[Theorem 5.5] {S} and Corollary \ref{cor:1}, $$\vert {\rm TLIM}_{\varphi _{1}}({\rm LUC}(H)^*) \vert =2^{2^d},$$ where $d$ is the smallest cardinality of a cover of $H$ by compact sets.
\end{example} 
\begin{example}
Let $H$ be an amenable, second countable and noncompact locally compact hypergroup. Then $L^{1}(H)$ is separable. Now, from \cite[Theorem 3.1]{Kan2} and Corollary \ref{cor:1}, we conclude that 
$$\vert {\rm TLIM}_{\varphi_1}(L^\infty (H))|=\vert {\rm TLIM}_{\varphi_1}({\rm LUC}(H)^* \vert =2^{c}.$$
\end{example}
Let $H$ be an spherical hypergroup associated to a locally compact group
$G$
and a spherical
projector $\pi$ which was introduced
and studied in \cite{M}. Then $\pi$ extends to a norm decreasing linear map on various function spaces, including the Fourier algebra $A(G)$; see \cite[Remark 2.2]{M}. A function $f$ is called $\pi$-radial if $\pi (f)=f$, and a measure $\mu$ is called $\pi$-radial if $\pi ^*(\mu)=\mu$.
Note that $H$ is a locally compact hausdorff space equipped with the natural quotient topology under the quotient map $p:G \rightarrow H$. Identifying $M(H)$ with the space of all $\pi$-radial measures in $M(G)$ and restricting the convolution on $M(G)$ to $M(H)$ makes $M(H)$ a Banach algebra. With this convolution structure, $H$ becomes a hypergroup, called a {\it spherical hypergroup} \cite[Theorem 2.12]{M}. A spherical hypergroup is called {\it ultraspherical} if the modular function on $G$ is $\pi$-radial. All double coset hypergroups and
hence all orbit hypergroups are contained
in the class of ultraspherical hypergroups.

 Recall that the character space $\Delta(A(H))$ of $A(H)$ can be canonically identified with $H$. More precisely, the map $\dot{x}\mapsto \varphi_{\dot{x}}$, where $\varphi_{\dot{x}}(u) = u(\dot{x})$ for $u\in A(H)$ is a homeomorphism from $H$ onto $\Delta(A(H))$. The Fourier algebra
$A(H)$ is semisimple, regular and Tauberian\cite[Theorem 3.13]{M}. As in the group case, let $\rho$ also denote the left regular representation of $H$ on $L^2(H)$. This can be extended to $L^1(H)$. Let $C^*_{\rho}(H)$ denote the completion of $\rho(L^1(H))$
in $B(L^2(H))$ which is called the reduced $C^*$-algebra of $H$. The von Neumann algebra generated
by $\{\rho(\dot{x}): \dot{x}\in H\}$ is called the von Neumannn algebra of $H$, and is denoted by $VN(H)$. Note that $VN(H)$ is isometrically isomorphic to the dual of $A(H)$ and by \cite[Theorem 3.5]{Sh1}, the set ${\rm TLIM}_{\varphi_{\dot{e}}}(VN(H))$ is nonempty. Since $A(H)$ is commutative, every topologically left invariant $\varphi_{\dot{e}}$-mean
is automatically a topologically two-sided invariant $\varphi_{\dot{e}}$-mean. 
 For brevity, we use the term topologically invariant mean  rather than topologically left invariant $\varphi_{\dot{e}}$-mean and use the following notations.
\begin{eqnarray*}
A_M(H):=\overline{A(H)}^{\Vert \cdot \Vert_M},\quad
{\rm UCB}(\hat{H})&:=&{\rm LUC}(VN(H)),\\
{\rm UCB}_M(\hat{H})&:=&{\rm LUC}(A_M(H)^*).
\end{eqnarray*}
In what follows, $H$ will always be an ultraspherical hypergroup on  locally compact group $G$. 
\begin{remark}\label{rem3}
Let $u\in I_{\varphi_{\dot{e}}}=\{u\in A(H): u(\dot{e})=0 \}$. Since $\{\dot{e}\}$ is a set of synthesis for $A(H)$ by \cite[Corollary 3.2]{Kan}, we
can suppose that $u$ has compact support. Using the regularity of $A(H)$, we find $v\in A(H)$ such that $v|_{{\rm supp} (u)}\equiv 1$, which implies that $u=uv\in A(H)\cdot I_{\varphi_{\dot{e}}} $. This
proves that $I_{\varphi_{\dot{e}}}=\overline{\langle A(H)\cdot I_{\varphi_{\dot{e}}}\rangle}$.
\end{remark}
\begin{example}
Let $H$ be a second countable locally compact ultraspherical hypergroup. Then $L^2(H)$ is separable and therefore, $A(H)$ is separable. Following \cite[Theorem 4.4]{Sh1}, if $H$ is not discrete, then there isn't a topologically  invariant mean in $A(H)$. Hence, by \cite[Theorem 3.1]{Kan2} and Corollary \ref{cor:1}, we have
$$\vert {\rm TLIM}_{\varphi_{\dot{e}}}({\rm UCB}(\hat{H})^*) \vert =\vert {\rm TLIM}_{\varphi_{\dot{e}}}({\rm UCB}_M(\hat{H})^*) \vert =\vert {\rm TLIM}_{\varphi_{\dot{e}}}(A_M(H)^{**})\vert =2^{c}.$$
\end{example}

\begin{lemma}\label{cor:5}
 Let $H$ be an ultraspherical hypergroup. Then  the following statements are equivalent.
 	
 	{\rm (i)} $H$ is discrete.
 	
 	{\rm (ii)} There is a unique topologically  invariant mean on $A_M(H)^{*}$.
 	
 	{\rm (iii)} $A_M(H)$ is an ideal in its second dual.
 \end{lemma}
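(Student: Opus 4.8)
The plan is to prove the three equivalences $(\mathrm{i})\Leftrightarrow(\mathrm{ii})$ and $(\mathrm{i})\Leftrightarrow(\mathrm{iii})$ by transporting each statement from $A_M(H)$ back to the Fourier algebra $A(H)$, where the corresponding facts are already available in the literature. The key structural input is that $A(H)$ is a faithful Banach algebra with character $\varphi_{\dot e}$, that $\overline{\langle A(H)\cdot I_{\varphi_{\dot e}}\rangle}=I_{\varphi_{\dot e}}$ by Remark~\ref{rem3}, and that ${\rm TLIM}_{\varphi_{\dot e}}(VN(H))$ is nonempty so that $A(H)$ carries a topologically invariant mean of norm one. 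These are exactly the hypotheses needed to invoke Corollary~\ref{cor:1} and Proposition~\ref{p3}.

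For $(\mathrm{i})\Leftrightarrow(\mathrm{ii})$, I would argue as follows. First, Shravan Kumar's result (cited as \cite{Sh}, or the separable-case reference \cite[Theorem 4.4]{Sh1}) gives that $H$ is discrete if and only if there is a \emph{unique} topologically invariant mean on $A(H)^*$, i.e. $|{\rm TLIM}_{\varphi_{\dot e}}(A(H)^{**})|=1$. By Corollary~\ref{cor:1}, whose hypotheses are met by the remarks above, the four cardinalities
$$|{\rm TLIM}_{\varphi_{\dot e}}(A_M(H)^{**})|=|{\rm TLIM}_{\varphi_{\dot e}}(A(H)^{**})|$$
agree (identifying $A_M(H)^{**}$ with the bidual in the notation of the theorem and noting ${\rm UCB}_M(\hat H)^*$, ${\rm UCB}(\hat H)^*$ all have the same count). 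Hence uniqueness on $A_M(H)^*$ is equivalent to uniqueness on $A(H)^*$, which is equivalent to discreteness of $H$. This direction is essentially a bookkeeping step once Corollary~\ref{cor:1} is in hand.

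For $(\mathrm{i})\Leftrightarrow(\mathrm{iii})$, the strategy is to combine Proposition~\ref{p3} with the known characterization of discreteness in terms of $A(H)$ being an ideal in its bidual. Since $A(H)$ satisfies $\overline{\langle A(H)\cdot A(H)\rangle}=A(H)$ (this follows from the Remark after Lemma~\ref{lem:1} applied with $\varphi=\varphi_{\dot e}$, using Remark~\ref{rem3}), Proposition~\ref{p3} gives directly that $A_M(H)$ is an ideal in its second dual if and only if $A(H)$ is an ideal in its second dual. It then remains to cite the result that $A(H)$ is an ideal in $A(H)^{**}$ precisely when $H$ is discrete; for the classical group case this is the Forrest--Miao type statement, and for hypergroups the analogous fact should be available in \cite{Sh1} or \cite{M}. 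Chaining these equivalences yields $(\mathrm{iii})\Leftrightarrow(\mathrm{i})$.

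The main obstacle I anticipate is not in the abstract machinery---Corollary~\ref{cor:1} and Proposition~\ref{p3} do the heavy lifting---but in correctly establishing the ``seed'' equivalences at the level of $A(H)$ itself, namely that discreteness of $H$ is equivalent both to uniqueness of the invariant mean on $A(H)^*$ and to $A(H)$ being an ideal in its bidual. These are the genuinely hypergroup-specific inputs, and I would need to verify that the cited hypergroup references actually supply them in the generality of ultraspherical hypergroups (rather than only for groups or for specific subclasses). A secondary technical point is confirming that all the hypotheses of Corollary~\ref{cor:1}---faithfulness of $A(H)$ and the condition $\overline{\langle A(H)\cdot I_{\varphi_{\dot e}}\rangle}=I_{\varphi_{\dot e}}$---hold; faithfulness follows from semisimplicity and Tauberianness of $A(H)$, and the augmentation-ideal condition is exactly Remark~\ref{rem3}, so this part is routine.
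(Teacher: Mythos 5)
Your proposal is correct and follows essentially the same route as the paper: (i)$\Leftrightarrow$(ii) via Corollary \ref{cor:1} together with Shravan Kumar's theorem (the paper handles (i)$\Rightarrow$(ii) slightly more directly, by checking that $\chi_{\dot{e}}\in A(H)\subseteq A_M(H)$ is itself the unique topologically invariant mean on $A_M(H)^*$), and (iii)$\Leftrightarrow$(i) via Remark \ref{rem3}, Proposition \ref{p3} and the characterization of discreteness by $A(H)$ being an ideal in $A(H)^{**}$. The only loose end in your write-up is the reference for that last seed fact: it is not in \cite{Sh1} or \cite{M} but is \cite[Lemma 3.2]{Alag}, which does cover ultraspherical hypergroups, so the obstacle you anticipate is resolved exactly as you hoped.
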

\begin{proof} 
 	(i)$\Rightarrow$(ii). Suppose that $H$ is discrete. Then the characteristic  function  $\chi_{\dot{e}}$,  belongs to $A(H)$. However, $\chi_{\dot{e}}$  is  a topologically invariant mean in $A(H)$. By density of $A(H)$  in $A_M(H)$ we conclude that $\chi_{\dot{e}}$ is also a topologically invariant mean in  $A_M(H)$. To show the uniqueness. Suppose  that $m$  is  another topologically  invariant mean on $A_M(H)^{*}$. Then topological invariance and commutativity of $A_M(H)$ implies that $m=\chi_{\dot{e}}\square m=m\square \chi_{\dot{e}}=\chi_{\dot{e}}$.
 	
 	(ii)$\Rightarrow$(i). This follows immediately from Corollary \ref{cor:1} and \cite[Theorem 1.7]{Sh}.
 	
 	(iii)$\Leftrightarrow$(ii). By Remark \ref{rem3} and Proposition \ref{p3} it suffices to show that $H$ is discrete if and only if $A(H)$ is an ideal in its second dual. However, this follows from \cite[Lemma 3.2]{Alag}. 
 \end{proof}	

\begin{corollary}
 Let $H$ be an ultraspherical hypergroup. If $A_M(H)$ is Arens regular, then $H$ is discrete.
 \end{corollary}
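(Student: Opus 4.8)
The plan is to reduce the statement to condition (ii) of Lemma \ref{cor:5}, namely that there is a \emph{unique} topologically invariant mean on $A_M(H)^{*}$; once this is established, the implication (ii)$\Rightarrow$(i) of that lemma immediately gives that $H$ is discrete. Accordingly I would split the argument into an existence part and a uniqueness part, the latter being where Arens regularity is used.

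For existence, I would recall that $A(H)$ is $\varphi_{\dot e}$-amenable, since ${\rm TLIM}_{\varphi_{\dot e}}(VN(H))$ is nonempty by \cite[Theorem 3.5]{Sh1} together with the identification $VN(H)=A(H)^{*}$. Remark \ref{rem3} supplies the hypothesis $\overline{\langle A(H)\cdot I_{\varphi_{\dot e}}\rangle}=I_{\varphi_{\dot e}}$, so Corollary \ref{cor:1} applies and transfers a topologically invariant mean from $A(H)^{**}$ to $A_M(H)^{**}$. Hence ${\rm TLIM}_{\varphi_{\dot e}}(A_M(H)^{**})\neq\emptyset$, and since $A_M(H)$ is commutative, every such mean is automatically two-sided invariant; in particular $A_M(H)$ is two-sided $\varphi_{\dot e}$-amenable.

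For uniqueness, the key observation is that Arens regularity of $A_M(H)$ is equivalent to the equality ${\rm WAP}(A_M(H)^{*})=A_M(H)^{*}$. Because $A_M(H)$ is commutative and two-sided $\varphi_{\dot e}$-amenable, Proposition \ref{p:4} then yields a unique topologically two-sided invariant $\varphi_{\dot e}$-mean on ${\rm WAP}(A_M(H)^{*})$, and Arens regularity identifies this space with all of $A_M(H)^{*}$. By commutativity this is precisely a unique topologically invariant mean on $A_M(H)^{*}$, so condition (ii) of Lemma \ref{cor:5} holds and $H$ is discrete.

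I expect the uniqueness step to be the main obstacle: one must first recognize that Arens regularity is exactly the statement ${\rm WAP}(A_M(H)^{*})=A_M(H)^{*}$, and then push the uniqueness furnished by Proposition \ref{p:4} from ${\rm WAP}$ up to the whole dual—this promotion is what Arens regularity licenses and is the crux of the argument. The existence step, by contrast, is routine given the transfer machinery of Corollary \ref{cor:1} and the synthesis fact recorded in Remark \ref{rem3}.
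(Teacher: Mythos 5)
Your proposal is correct and follows essentially the same route as the paper: Arens regularity gives ${\rm WAP}(A_M(H)^{*})=A_M(H)^{*}$, Proposition \ref{p:4} yields uniqueness of the topologically invariant mean, and Lemma \ref{cor:5} (ii)$\Rightarrow$(i) concludes discreteness. The only difference is that you spell out the existence step (via \cite[Theorem 3.5]{Sh1}, Remark \ref{rem3} and Corollary \ref{cor:1}), which the paper leaves implicit since it is already recorded earlier in the section; this is a reasonable and harmless elaboration.
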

 \begin{proof}
 If $A_M(H)$ is Arens regular, then $\rm{WAP}(A_M(H)^{*})=A_M(H)^{*}$. From Proposition \ref{p:4} we conclude that there is a unique topologically  invariant mean on $A_M(H)^{*}$. Hence, by Lemma \ref{cor:5}, $H$ is discrete.
 \end{proof}
\begin{proposition}\label{pr1}
 Let $H$ be an ultraspherical hypergroup. Then $\rm {UCB}(\hat{H})\subseteq \rm{WAP}(A(H)^*)$ if and only if $H$ is discrete.
 \end{proposition}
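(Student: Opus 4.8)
The plan is to show, for $A(H)$, that the inclusion $\mathrm{UCB}(\hat H)\subseteq \mathrm{WAP}(A(H)^*)$ is equivalent to discreteness of $H$, handling the two implications with different machinery. For "$H$ discrete $\Rightarrow$ inclusion" I would use that discreteness is equivalent to $A(H)$ being an ideal in its second dual, together with a weak-compactness factorisation; for the converse I would avoid trying to reverse that factorisation (which would require a bounded approximate identity that $A(H)$ need not possess) and instead run a counting argument built on Theorem \ref{th:2}, Proposition \ref{p:4}, and Shravan Kumar's theorem. Throughout I write $\varphi=\varphi_{\dot e}$ and use that $A(H)$ is commutative (so $a\cdot f=f\cdot a$), faithful, $\varphi$-amenable by \cite[Theorem 3.5]{Sh1}, and satisfies $\overline{\langle A(H)\cdot I_\varphi\rangle}=I_\varphi$ by Remark \ref{rem3}; recall also $\mathrm{UCB}(\hat H)=\mathrm{LUC}(A(H)^*)=\overline{\langle A(H)^*\cdot A(H)\rangle}$.

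For the direction $H$ discrete $\Rightarrow \mathrm{UCB}(\hat H)\subseteq \mathrm{WAP}(A(H)^*)$, the first step is to note, via Lemma \ref{cor:5} and Proposition \ref{p3}, that $A(H)$ is an ideal in its second dual, so that every multiplication operator $l_a:u\mapsto au$ on $A(H)$ is weakly compact. The key computation is then that, for $f\in VN(H)=A(H)^*$ and $a\in A(H)$, commutativity gives
$$u\cdot(f\cdot a)=f\cdot(au)\qquad(u\in A(H)).$$
Hence the operator $u\mapsto u\cdot(f\cdot a)$ factors as $M_f\circ l_a$, where $M_f(x):=f\cdot x$ is bounded; as the composite of a bounded map with a weakly compact one, it is weakly compact, so $f\cdot a\in \mathrm{WAP}(A(H)^*)$. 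Since $\mathrm{WAP}(A(H)^*)$ is norm-closed and $\mathrm{UCB}(\hat H)$ is the closed linear span of such $f\cdot a$, the inclusion follows.

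For the converse, assume $\mathrm{UCB}(\hat H)\subseteq \mathrm{WAP}(A(H)^*)$. By Theorem \ref{th:2} the restriction map $R:\mathrm{TLIM}_\varphi(A(H)^{**})\to \mathrm{TLIM}_\varphi(\mathrm{UCB}(\hat H)^*)$ is a bijection. Next I would observe that, since $\mathrm{WAP}(A(H)^*)$ is a right $A(H)$-submodule of $A(H)^*$ containing $\varphi$, restricting any $\tilde n\in \mathrm{TLIM}_\varphi(A(H)^{**})$ to $\mathrm{WAP}(A(H)^*)$ again yields a topologically invariant $\varphi$-mean of norm one; call this map $R'$. Because $\mathrm{UCB}(\hat H)\subseteq \mathrm{WAP}(A(H)^*)$, the bijection $R$ factors as $R=(\text{restriction } \mathrm{WAP}\to\mathrm{UCB})\circ R'$, and injectivity of $R$ forces $R'$ to be injective. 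On the other hand, $A(H)$ is commutative and $\varphi$-amenable, so every topologically left invariant $\varphi$-mean on $\mathrm{WAP}(A(H)^*)$ is two-sided invariant, and Proposition \ref{p:4} shows there is at most one of them. Injectivity of $R'$ then forces $|\mathrm{TLIM}_\varphi(A(H)^{**})|\le 1$; since this set is nonempty, it is a singleton, and by \cite[Theorem 1.7]{Sh}, used exactly as in the proof of Lemma \ref{cor:5}, $H$ is discrete.

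The routine parts are the module-action identity $u\cdot(f\cdot a)=f\cdot(au)$ and the verification that $R'$ really lands in the norm-one invariant means on $\mathrm{WAP}(A(H)^*)$. The main obstacle, and the reason the two directions need different tools, is the converse: a naive attempt to deduce "$A(H)$ is an ideal in its second dual" directly from the inclusion would amount to reversing the factorisation $M_f\circ l_a$, which is not available without a bounded approximate identity, equivalently without amenability of $H$. Routing the converse through the cardinality identities of Corollary \ref{cor:1}/Theorem \ref{th:2} and the uniqueness of the invariant mean on $\mathrm{WAP}(A(H)^*)$ from Proposition \ref{p:4} circumvents this difficulty entirely.
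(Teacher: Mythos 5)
Your proposal is correct, but it diverges from the paper's argument in the ``$H$ discrete $\Rightarrow$ inclusion'' direction. There the paper argues directly: for discrete $H$ each characteristic function $\chi_{\dot x}$ lies in $A(H)$, and the identity $T\cdot\chi_{\dot x}=\langle T,\chi_{\dot x}\rangle\,\varphi_{\dot x}$ shows that $T\cdot u$ is a finite linear combination of characters (hence in $\mathrm{WAP}(A(H)^*)$) whenever $u$ has finite support; Tauberianness and norm-closedness of $\mathrm{WAP}$ then give $\mathrm{UCB}(\hat H)\subseteq\mathrm{WAP}(A(H)^*)$. You instead route through the equivalence ``$H$ discrete $\iff$ $A(H)$ is an ideal in $A(H)^{**}$'' (available via the citation of Alaghmandan inside Lemma \ref{cor:5}) and the standard factorisation $u\mapsto u\cdot(f\cdot a)=f\cdot(au)=M_f\circ l_a(u)$ with $l_a$ weakly compact; this is a correct and more conceptual proof of the general fact that $\mathcal A^*\cdot\mathcal A\subseteq\mathrm{WAP}(\mathcal A^*)$ when $\mathcal A$ is an ideal in its bidual, at the cost of importing the external ideal characterisation, whereas the paper's computation is elementary and self-contained. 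For the converse your argument is essentially the paper's: both reduce to the uniqueness of the topologically invariant mean coming from Proposition \ref{p:4} (coincidence of the Arens products on $\mathrm{WAP}$) and then invoke the transfer results and Shravan Kumar's theorem; you organise this by factoring the bijection $R$ of Theorem \ref{th:2} through restriction to $\mathrm{WAP}(A(H)^*)$ and using injectivity, while the paper applies Proposition \ref{p:4} directly on $\mathrm{UCB}(\hat H)^*$ and then uses Corollary \ref{cor:1} and Lemma \ref{cor:5} --- a cosmetic rather than substantive difference. All the hypotheses you need (faithfulness, existence of a norm-one invariant mean from \cite[Theorem 3.5]{Sh1}, $\overline{\langle A(H)\cdot I_{\varphi_{\dot e}}\rangle}=I_{\varphi_{\dot e}}$ from Remark \ref{rem3}, and $\varphi_{\dot e}\in\mathrm{WAP}$) are indeed available, so no gap remains.
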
 
 \begin{proof}
 If $\rm {UCB}(\hat{H})\subseteq \rm{WAP}(A(H)^*)$, then by Proposition \ref{p:4}, there is a unique topologically invariant mean on $\rm {UCB}(\hat{H})$. Therefore, by Corollary \ref{cor:1} and Lemma \ref{cor:5}, $H$ is discrete. For the converse, suppose that $H$ is discrete. Then for each $\dot{x}\in H$ the characteristic function $\chi _{\dot{x}}$ is in $A(H)$. Thus, for each $T\in VN(H)$ and $u\in A(H)$, we have 
$$\langle T\cdot \chi _{\dot{x}}, u\rangle=\langle T, \chi _{\dot{x}}u(\dot{x})\rangle=u(\dot{x})\langle T, \chi _{\dot{x}}\rangle .$$
This shows that $T\cdot \chi _{\dot{x}}=\langle T, \chi _{\dot{x}}\rangle \varphi_{\dot{x}}\in \rm{WAP}(A(H)^*)$.
Now, let $u\in A(H)$. Since $A(H)$ is Tauberian, we
can suppose that $u$ has compact and hence finite support. Thus, $u$ 
is a finite linear combination of characteristic functions on one point sets. Therefore, $T\cdot u\in \rm{WAP}(A(H)^*)$. It follows that ${\rm UCB}(\hat{H})\subseteq \rm{WAP}(A(H)^*)$.
 \end{proof}
 \begin{lemma}\label{lem1}
 Let $H$ be an ultraspherical hypergroup. Then $A_M(H)$ is a regular, commutative and
 Tauberian Banach algebra whose character space is canonically
 identified with $H$.
 \end{lemma}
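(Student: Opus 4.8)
The plan is to transfer commutativity, regularity and the Tauberian property from $A(H)$ to its multiplier completion $A_M(H)$, the point being that $A(H)$ sits inside $A_M(H)$ as a dense two-sided ideal whose character space is unchanged. Recall from Section~2 that $A(H)$ is a Segal algebra in $A_M(H)$, so by \cite[Theorem 2.1]{bu} the restriction map $\psi\mapsto\psi|_{A(H)}$ is a homeomorphism of $\Delta(A_M(H))$ onto $\Delta(A(H))$, and the latter is already canonically identified with $H$ via $\dot{x}\mapsto\varphi_{\dot{x}}$. The key observation I would record first is that this identification is compatible with the inclusion: if $\psi_{\dot{x}}\in\Delta(A_M(H))$ is the character corresponding to $\dot{x}\in H$, then $\psi_{\dot{x}}(u)=\varphi_{\dot{x}}(u)=u(\dot{x})$ for every $u\in A(H)$. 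Hence the Gelfand transform of any $u\in A(H)$, computed in $A_M(H)$, is again the function $\dot{x}\mapsto u(\dot{x})$, so ``vanishing on a subset of $H$'' and ``having compact support'' mean the same thing whether $u$ is viewed in $A(H)$ or in $A_M(H)$. Commutativity of $A_M(H)$ and the identification of its character space with $H$ are then immediate, the former because the product of $A_M(H)$ extends by continuity the commutative product of the dense subalgebra $A(H)$.

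For regularity I would argue by direct transfer. Given a closed set $E\subseteq H$ and a point $\dot{x}\in H\setminus E$, regularity of $A(H)$ (\cite[Theorem~3.13]{M}) provides $u\in A(H)$ with $u(\dot{x})\neq 0$ and $u|_E=0$. Viewing $u$ as an element of $A_M(H)$, its Gelfand transform there is still $\dot{x}\mapsto u(\dot{x})$ by the remark above, so the same $u$ separates $\dot{x}$ from $E$ inside $A_M(H)$. Thus $A_M(H)$ is regular.

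For the Tauberian property I would exploit the inequality $\|\cdot\|_M\le\|\cdot\|$ recorded in Section~2. Since $A(H)$ is Tauberian, the elements of $A(H)$ with compact support are dense in $A(H)$ for the $A(H)$-norm; as $\|\cdot\|_M\le\|\cdot\|$, they are a fortiori $\|\cdot\|_M$-dense in $A(H)$. Because $A(H)$ is itself $\|\cdot\|_M$-dense in $A_M(H)$ by construction, composing the two density statements shows that the compactly supported elements are $\|\cdot\|_M$-dense in $A_M(H)$; and by the first paragraph their transforms in $A_M(H)$ still have compact support. Hence $A_M(H)$ is Tauberian.

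The only genuinely delicate point, which I would state and justify with care, is the compatibility claim of the first paragraph: the whole argument rests on the homeomorphism $\Delta(A_M(H))\cong\Delta(A(H))$ being implemented by restriction, so that the Gelfand transform of an element of $A(H)$ is left unchanged on passing to $A_M(H)$. Once this is secured, all three assertions follow from routine density and transfer arguments, and no further harmonic analysis on $H$ is needed.
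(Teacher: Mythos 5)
Your proof is correct and follows essentially the same route as the paper's: commutativity by density, the character space via Burnham's theorem for Segal algebras, regularity by transferring separating functions from $A(H)$, and the Tauberian property from $\Vert\cdot\Vert_M\leq\Vert\cdot\Vert$ together with density of $A(H)$ in $A_M(H)$. The only difference is that you make explicit the compatibility of the Gelfand transforms under the restriction homeomorphism, which the paper leaves implicit; this is a welcome clarification but not a different argument.
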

 \begin{proof}
 It is not hard to check that $A_M(H)$ is commutative. Since $A(H)$ is an abstract Segal algebra in ${{\mathcal A}}_M(H)$, it follows from \cite[Theorem 2.1]{bu} that the character space of $A_M(H)$ is homeomorphic to $H$ and $A_M(H)$ is semisimple. Since $A(H)$ is regular, for a closed set $F\subseteq H$ and $\dot{x}\in H\diagdown F$, there is $u\in A(H)\subseteq A_M(H)$ such that $u|_F\equiv 0$ , and $u(\dot{x})=1$. Thus, $A_M(H)$ is regular. Finally, since $A(H)$ is Tauberian and dense in $A_M(H)$, we conclude that $A_M(H)$ is also Tauberian.
 \end{proof}
 The following result follows directly from Lemma \ref{lem1} and arguments
 similar to the proof of Proposition \ref{pr1}.
\begin{corollary}
 Let $H$ be an ultraspherical hypergroup. Then $\rm {UCB}_M(\hat{H})\subseteq \rm{WAP}(A_M(H)^*)$ if and only if $H$ is discrete.
 \end{corollary}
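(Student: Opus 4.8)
The plan is to establish the two implications separately, mirroring the proof of Proposition~\ref{pr1} with $A(H)$ replaced throughout by its multiplier completion $A_M(H)$. The structural features of $A(H)$ that drive that proof --- commutativity, regularity, the Tauberian property, and the identification of the character space with $H$ --- are exactly what Lemma~\ref{lem1} supplies for $A_M(H)$, so the substitution is legitimate. Write $\varphi:=\varphi_{\dot e}$ and let $I_\varphi=\{u\in A_M(H):u(\dot e)=0\}$ be the augmentation ideal of $A_M(H)$.

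\emph{Discreteness implies the inclusion.} I would argue as follows. With $H$ discrete, each point mass $\chi_{\dot x}$ lies in $A(H)\subseteq A_M(H)$; since $A_M(H)$ is semisimple with character space $H$ (Lemma~\ref{lem1}), one has $\chi_{\dot x}u=u(\dot x)\chi_{\dot x}$ in $A_M(H)$, so that for every $T\in A_M(H)^*$,
\[
T\cdot\chi_{\dot x}=\langle T,\chi_{\dot x}\rangle\,\varphi_{\dot x}.
\]
This element lies in the one-dimensional span of the character $\varphi_{\dot x}$, so $\{u\cdot(T\cdot\chi_{\dot x}):\|u\|\le 1\}$ is a bounded subset of a finite-dimensional space, hence relatively weakly compact, giving $T\cdot\chi_{\dot x}\in{\rm WAP}(A_M(H)^*)$. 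Because $A_M(H)$ is Tauberian and $H$ is discrete, the finitely supported elements (finite linear combinations of the $\chi_{\dot x}$) are dense in $A_M(H)$; by linearity, continuity of the module action, and norm-closedness of ${\rm WAP}(A_M(H)^*)$ I would then get $T\cdot u\in{\rm WAP}(A_M(H)^*)$ for all $u\in A_M(H)$, and passing to the closed span yields ${\rm UCB}_M(\hat H)\subseteq{\rm WAP}(A_M(H)^*)$.

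\emph{The inclusion implies discreteness.} Assume ${\rm UCB}_M(\hat H)\subseteq{\rm WAP}(A_M(H)^*)$. By Theorem~\ref{th:1} applied to $A(H)$, the algebra $A_M(H)$ inherits a norm-one topologically invariant mean and is thus $\varphi$-amenable; being commutative (Lemma~\ref{lem1}) it is two-sided $\varphi$-amenable, so Proposition~\ref{p:4} provides a \emph{unique} topologically invariant mean on ${\rm WAP}(A_M(H)^*)$. I would then study the restriction map
\[
r:{\rm TLIM}_{\varphi}(A_M(H)^{**})\longrightarrow{\rm TLIM}_{\varphi}({\rm UCB}_M(\hat H)^*).
\]
Since ${\rm UCB}_M(\hat H)\subseteq{\rm WAP}(A_M(H)^*)\subseteq A_M(H)^*$, the map $r$ factors through restriction to ${\rm WAP}(A_M(H)^*)$; as every member of ${\rm TLIM}_\varphi(A_M(H)^{**})$ restricts to the unique invariant mean there, the image of $r$ is a single point. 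But Theorem~\ref{th:2}, applied to $A_M(H)$, shows $r$ is a bijection, so its codomain --- hence its domain --- is a singleton. Thus there is a unique topologically invariant mean on $A_M(H)^*$, and Lemma~\ref{cor:5}, (ii)$\Rightarrow$(i), concludes that $H$ is discrete.

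The main obstacle will be justifying the use of Theorem~\ref{th:2} for $A_M(H)$, which demands that $A_M(H)$ be faithful, carry a norm-one invariant mean, and satisfy $\overline{\langle A_M(H)\cdot I_\varphi\rangle}=I_\varphi$. Faithfulness is immediate from semisimplicity and the character-space identification in Lemma~\ref{lem1}, and the invariant mean is precisely what Theorem~\ref{th:1} yields. The augmentation identity is the genuinely new verification: I would deduce it from Remark~\ref{rem3} together with density, noting that $A(H)$ is $\|\cdot\|_M$-dense in $A_M(H)$, that consequently the augmentation ideal $I_{\varphi_{\dot e}}$ of $A(H)$ (Remark~\ref{rem3}) is $\|\cdot\|_M$-dense in $I_\varphi$, and that $I_\varphi$ is a closed ideal of $A_M(H)$; squeezing $\langle A(H)\cdot I_{\varphi_{\dot e}}\rangle\subseteq\langle A_M(H)\cdot I_\varphi\rangle\subseteq I_\varphi$ and taking $\|\cdot\|_M$-closures then propagates the factorization $\overline{\langle A(H)\cdot I_{\varphi_{\dot e}}\rangle}=I_{\varphi_{\dot e}}$ of Remark~\ref{rem3} to $\overline{\langle A_M(H)\cdot I_\varphi\rangle}=I_\varphi$. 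Once this is in place, the remainder is a faithful transcription of the proof of Proposition~\ref{pr1}.
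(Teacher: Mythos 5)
Your proof is correct and follows essentially the same route as the paper, whose own proof simply says the result ``follows directly from Lemma \ref{lem1} and arguments similar to the proof of Proposition \ref{pr1}'' --- your two directions are precisely that transcription. The only divergence is in the reverse direction, where you apply Theorem \ref{th:2} to $A_M(H)$ itself (necessitating your --- correct --- verification of $\overline{\langle A_M(H)\cdot I_\varphi\rangle}=I_\varphi$ by density), whereas the paper would instead invoke Corollary \ref{cor:1} for $A(H)$, which already gives $\vert {\rm TLIM}_{\varphi_{\dot e}}({\rm UCB}_M(\hat H)^*)\vert=\vert {\rm TLIM}_{\varphi_{\dot e}}(A_M(H)^{**})\vert$ without that extra check.
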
 
Following \cite[Lemma 3.7]{Kan}, if $G$ is amenable, then $A(H)$ has an approximate identity $(u_\alpha)$ of norm bound $1$. Hence, any $w^*$-cluster point $E$ of $(u_{\alpha})$ is a right identity for $ (VN(H)^{*}, \square)$ with $\Vert E \Vert =1$. Also, $E \square u= u \square E=u$ for all $u \in A(H)$. Define $\mathcal{E}:=\mathcal{E}(A(H))$ to be the set of all right identities of $VN(H)^*$ that are $w^*$-cluster points of approximate identities in $A(H)$ bounded by one. The following results are generalizing those of  Lau and Losert \cite{llo} obtained for these spaces in the group setting.
\begin{lemma} \label{lem:2}
Let $G$ be an amenable locally compact group and $E$ be a right identity of $VN(H)^*$ with norm one. Then the closed right ideal $E \square VN(H)^*$ of $VN(H)^*$ is isometrically isomorphic to Banach algebra ${\rm UCB}(\hat{H})^*$. 
\end{lemma}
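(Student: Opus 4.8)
The plan is to realise the claimed isomorphism through the restriction map (the adjoint of the inclusion ${\rm UCB}(\hat H)\hookrightarrow VN(H)$) together with the idempotent $E\square(\cdot)$. Throughout I identify $VN(H)^{*}=A(H)^{**}$, and recall that ${\rm UCB}(\hat H)={\rm LUC}(VN(H))=\overline{\langle VN(H)\cdot A(H)\rangle}$ is left introverted, so that $({\rm UCB}(\hat H)^{*},\square)$ is a Banach algebra. First I would record the elementary consequences of $E$ being a norm-one right identity for $\square$: from $m\square E=m$ one gets $\|E\|\ge 1$ (hence $=1$) and $E\square E=E$, so $P:=E\square(\cdot)$ is an idempotent with $\|P\|\le\|E\|=1$ and $E\square VN(H)^{*}$ is a closed right ideal, hence a Banach subalgebra. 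The same relation $m\square E=m$ unwinds to $E\cdot f=f$ for every $f\in VN(H)$, which by the definition of the module action means
\[
\langle E,\,T\cdot a\rangle=\langle T,a\rangle\qquad(T\in VN(H),\ a\in A(H)).
\]
The candidate map is the restriction $R:A(H)^{**}\to{\rm UCB}(\hat H)^{*}$; it is norm-decreasing, surjective by Hahn--Banach, and a $\square$-homomorphism (since for $f\in{\rm UCB}(\hat H)$ one has $R(n)\cdot f=n\cdot f\in{\rm UCB}(\hat H)$ by left introversion, whence $\langle R(m)\square R(n),f\rangle=\langle m,n\cdot f\rangle=\langle R(m\square n),f\rangle$). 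I would then show that $R$ restricts to an isometric algebra isomorphism of $E\square VN(H)^{*}$ onto ${\rm UCB}(\hat H)^{*}$.

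Two facts drive everything. The first is that $\ker R\subseteq\ker P$: if $k|_{{\rm UCB}(\hat H)}=0$, then for each $f\in VN(H)$ the element $k\cdot f\in VN(H)$ satisfies $\langle k\cdot f,a\rangle=\langle k,f\cdot a\rangle=0$ because $f\cdot a\in{\rm UCB}(\hat H)$, whence $k\cdot f=0$ and therefore $E\square k=0$. The second, and the heart of the argument, is the intertwining identity $R\circ P=R$, that is, $\langle E\square n,f\rangle=\langle n,f\rangle$ for all $f\in{\rm UCB}(\hat H)$. By density it suffices to verify this on $f=T\cdot a$ with $T\in VN(H)$, $a\in A(H)$, and there the bimodule associativity $n\cdot(T\cdot a)=(n\cdot T)\cdot a$ together with the displayed property of $E$ gives
\[
\langle E\square n,\,T\cdot a\rangle=\langle E,\,(n\cdot T)\cdot a\rangle=\langle n\cdot T,a\rangle=\langle n,\,T\cdot a\rangle.
\]
I expect this intertwining to be the main obstacle, since it is exactly the point at which the right-identity hypothesis must be converted into the assertion that restriction to ${\rm UCB}(\hat H)$ loses none of the information carried by $E\square(\cdot)$.

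With these in hand the conclusion is routine. I would define $S:{\rm UCB}(\hat H)^{*}\to E\square VN(H)^{*}$ by $S(\psi)=E\square\tilde\psi$ for any Hahn--Banach extension $\tilde\psi\in A(H)^{**}$ of $\psi$; this is well defined because any two extensions differ by an element of $\ker R\subseteq\ker P$. Then $R\circ S=\mathrm{id}$ by the intertwining identity, while for $m=P(m)\in E\square VN(H)^{*}$ the difference $\widetilde{R(m)}-m$ lies in $\ker R\subseteq\ker P$, so $S(R(m))=P(\widetilde{R(m)})=P(m)=m$; hence $R$ and $S$ are mutually inverse bijections. Choosing the extension with $\|\tilde\psi\|=\|\psi\|$ yields $\|S(\psi)\|\le\|E\|\,\|\psi\|=\|\psi\|$, and as $R$ is already norm-decreasing both $R$ and $S$ are contractions, so $R$ is isometric. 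Finally $R$ is a $\square$-homomorphism and $E\square VN(H)^{*}$ is closed under $\square$, so the restriction of $R$ is the desired isometric isomorphism of Banach algebras. (Existence of such an $E$ is guaranteed by the amenability of $G$, via the norm-one approximate identity of $A(H)$ discussed above.)
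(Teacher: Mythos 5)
Your proof is correct, and while it shares the paper's overall skeleton (restriction map $r$, Hahn--Banach extension, and the idempotent $E\square(\cdot)$), the execution of the two key steps is genuinely different. The paper picks a net $(u_\alpha)$ in the unit ball of $A(H)$ with $u_\alpha\overset{w^*}{\to}E$, proves the isometry via $\langle m,T\rangle=\lim\langle r(m),T\cdot u_\alpha\rangle$, and proves surjectivity by Cohen factorization $T=S\cdot u$ together with $\lim\langle n',S\cdot(uu_\alpha)\rangle=\langle n',S\cdot u\rangle$; this last limit needs $uu_\alpha\to u$ in norm, i.e.\ it quietly requires $(u_\alpha)$ to be an actual approximate identity (equivalently $E\in\mathcal{E}$), a fact only established in the proposition that follows the lemma. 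You instead extract the single identity $\langle E,T\cdot a\rangle=\langle T,a\rangle$ directly from $m\square E=m$, and from it derive the intertwining relation $R\circ P=R$ on the dense subspace $\langle VN(H)\cdot A(H)\rangle$ together with $\ker R\subseteq\ker P$; everything else (well-definedness of $S$, $R\circ S=\mathrm{id}$, $S\circ R=\mathrm{id}$ on the range of $P$, and the isometry from two mutually inverse contractions) is then formal. This buys you two things: the argument needs neither the approximate identity net nor Cohen's theorem, so it applies verbatim to any norm-one right identity without forward reference; and you explicitly verify that $R$ is a $\square$-homomorphism, a point the paper leaves implicit even though the statement asserts an isomorphism of Banach algebras. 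The paper's version, in exchange, is shorter and sets up the net $(u_\alpha)$ that is reused in the subsequent propositions.
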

\begin{proof}
Let $(u_{\alpha}) \subseteq A(H)$ with $\Vert u_{\alpha} \Vert \leq 1$ and $u_{\alpha} \overset{w^*}{\longrightarrow} E$ in $VN(H)^*$. It is clear that $E\square VN(H)^*$ is a closed right ideal of $VN(H)^*$ and the restriction map 
$$r : E \square VN(H)^* \rightarrow {\rm UCB}(\hat{H})^*$$ is well-defined, linear and contractive. Now, if $ m \in E \square VN(H)^*$, then there exists $n \in VN(H)^*$ such that $m=E \square n$. Hence, for each $T \in VN(H)$, we have
\begin{eqnarray*}
\langle m, T \rangle &=& \langle E \square n, T \rangle =\langle E \square E\square n, T \rangle \\
&=& \langle E \square m, T \rangle=\lim \langle m, T\cdot u_{\alpha} \rangle \\
&=& \lim \langle r(m), T\cdot u_{\alpha} \rangle.
\end{eqnarray*}
This shows that $\Vert m \Vert = \Vert r (m) \Vert$.
Now, we show that $r$ is surjective. Let $n^{\prime}\in {\rm UCB}(\hat{H})^*$. Then by the Hahn-Banach theorem, there is $n \in VN(H)^*$ such that $n|_{{\rm UCB}(\hat{H})}=n^{\prime}$. If $T \in {\rm UCB}(\hat{H})$, then by the Cohen's Factorization theorem, there exist $u \in A(H)$ and $S \in VN(H)$ such that $T=S\cdot u$. Therefore, 
\begin{eqnarray*}
\langle E \square n, T \rangle &=& \lim \langle n, T\cdot u_{\alpha} \rangle =\lim \langle n^{\prime}, T\cdot u_{\alpha} \rangle \\
&=& \lim \langle n^{\prime}, S\cdot (u{u_{\alpha}}) \rangle= \langle n^{\prime}, S\cdot u \rangle \\
&=& \langle n^{\prime}, T \rangle ;
\end{eqnarray*} 
that is $E \square n \in E \square VN(H)^*$ and $E \square n |_{{\rm UCB}(\hat{H})}=n^{\prime}$
\end{proof}
\begin{proposition}
Let $H$ and $H'$ be two ultraspherical hypergroups. If $A(H)$ has an approximate identity bounded by one and $VN(H)^*$ and $VN(H^{\prime})^*$ are isometrically isomorphic, then ${\rm UCB}(\hat{H})^*$ and ${\rm UCB}(\hat{H^{\prime}})^*$ are also isometrically isomorphic.
\end{proposition}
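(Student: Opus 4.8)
The plan is to use Lemma~\ref{lem:2} to realise each of ${\rm UCB}(\hat H)^*$ and ${\rm UCB}(\hat{H'})^*$ as a concrete right ideal of the form $E\square VN(\cdot)^*$ attached to a norm-one right identity $E$, and then to move one onto the other through the given isometric (algebra) isomorphism $\Theta\colon VN(H)^*\to VN(H')^*$. Since $A(H)$ has an approximate identity bounded by one, the set $\mathcal{E}(A(H))$ is non-empty; I would fix $E\in\mathcal{E}(A(H))$, a norm-one right identity of $(VN(H)^*,\square)$ that is a $w^*$-cluster point of a norm-one approximate identity. By Lemma~\ref{lem:2} (whose proof uses only the existence of such a net, not the amenability of $G$ as such) the restriction map yields an isometric isomorphism ${\rm UCB}(\hat H)^*\cong E\square VN(H)^*$. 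As $\Theta$ is an isometric algebra isomorphism, $E':=\Theta(E)$ is again a right identity of $(VN(H')^*,\square)$ with $\Vert E'\Vert=1$, and $\Theta$ restricts to an isometric algebra isomorphism of $E\square VN(H)^*$ onto $\Theta(E)\square VN(H')^*=E'\square VN(H')^*$. Thus ${\rm UCB}(\hat H)^*\cong E'\square VN(H')^*$.

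The step I expect to be the main obstacle is the $H'$ side: although $E'$ is a norm-one right identity, it need not be a $w^*$-cluster point of an approximate identity of $A(H')$ (the isomorphism $\Theta$ need not be $w^*$-continuous), so Lemma~\ref{lem:2} cannot be applied to $E'$ directly. I would circumvent this in two steps. (a) From $E'$ being a right identity of $(VN(H')^*,\square)=(A(H')^{**},\square)$ of norm one, Goldstine's theorem supplies a net $(v_\beta)$ in the closed unit ball of $A(H')$ with $v_\beta\overset{w^*}{\longrightarrow}E'$; the right-identity relation $u\square E'=u$ gives $\langle E',T\cdot u\rangle=\langle T,u\rangle$ and hence $uv_\beta\to u$ weakly for each $u\in A(H')$, and a Mazur convexity argument upgrades this to a genuine approximate identity of $A(H')$ bounded by one. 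Therefore $\mathcal{E}(A(H'))\neq\emptyset$; fixing $F'\in\mathcal{E}(A(H'))$ and applying Lemma~\ref{lem:2} to $H'$ gives ${\rm UCB}(\hat{H'})^*\cong F'\square VN(H')^*$. (b) For any two norm-one right identities $E',F'$ of $(VN(H')^*,\square)$, the maps $m\mapsto F'\square m$ and $m\mapsto E'\square m$ are mutually inverse, contractive, and multiplicative between $E'\square VN(H')^*$ and $F'\square VN(H')^*$ (using $E'\square F'=E'$, $F'\square E'=F'$ and the idempotency $E'\square E'=E'$), hence isometric algebra isomorphisms.

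Finally I would chain the isometric algebra isomorphisms
$$
{\rm UCB}(\hat H)^*\;\cong\;E'\square VN(H')^*\;\cong\;F'\square VN(H')^*\;\cong\;{\rm UCB}(\hat{H'})^*,
$$
which gives the assertion. The routine verifications (well-definedness and contractivity of the restriction map, the module identity $T\cdot v_\beta\in{\rm UCB}(\hat{H'})$, and the Cohen-factorisation step internal to Lemma~\ref{lem:2}) carry over unchanged; the only genuinely new work is the transfer of the bounded approximate identity to $A(H')$ in step~(a) and the intertwining isomorphism of step~(b).
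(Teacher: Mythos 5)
Your proposal is correct and follows essentially the same route as the paper: fix a norm-one right identity $E$ of $VN(H)^*$, use Lemma \ref{lem:2} to identify ${\rm UCB}(\hat{H})^*$ with $E\square VN(H)^*$, and push this ideal through the isometric isomorphism onto $\gamma(E)\square VN(H')^*$. The only difference is that the paper applies Lemma \ref{lem:2} directly to $\gamma(E)$ (its statement requires only a norm-one right identity, and the later proposition's implication (iii)$\Rightarrow$(i) shows every such element lies in $\mathcal{E}$), so your steps (a) and (b) amount to a careful, and valid, justification of a point the paper passes over silently.
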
 
\begin{proof}
Suppose that $\gamma :VN(H)^* \rightarrow VN(H^{\prime})^*$ is an isometrically isomorphic. If $E$ is a right identity of $VN(H)^*$ with norm one, then we can easily see that $\gamma (E)$ is also a right identity of $VN(H^{\prime})^*$ with norm one. Hence, by Lemma \ref{lem:2}, we have 
$$ {\rm LUC}(\hat{H})^*\cong E\square VN(H)^* \cong \gamma(E)\square VN(H^{\prime})^* \cong {\rm LUC}(\hat{H^{\prime}})^*.$$
Consequently, ${\rm LUC}(\hat{H})^*$ and ${\rm LUC}(\hat{H^{\prime}})^*$ are isometrically isomorphic.
\end{proof}
Let $G$ be an amenable locally compact group. We put 
$$\Lambda (H)=\bigcap _{E \in \mathcal{E}} E \square VN(H)^*.$$
It is clear that $A(H) \subseteq \Lambda (H)$, since $E \square u=u$ for all $u \in A(H)$ and $E \in \mathcal{E}$.
Also, each $E \square VN(H)^*$ is a closed right ideal in $VN(H)^*$. Thus, $\Lambda(H)$ is a closed right ideal in $VN(H)^*$. 
\begin{proposition}\label{p:5}
Let $G$ be an amenable locally compact group. Then $A(H)$ is an ideal in $\Lambda(H)$ if and only if $H$ is discrete. 
\end{proposition}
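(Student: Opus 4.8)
The plan is to deduce both implications from the fact, recalled in the proof of Lemma \ref{cor:5} (via Remark \ref{rem3}, Proposition \ref{p3} and \cite[Lemma 3.2]{Alag}), that $H$ is discrete if and only if $A(H)$ is an ideal in its second dual $VN(H)^{*}=A(H)^{**}$. For the easy direction, suppose $H$ is discrete, so that $A(H)$ is an ideal in $VN(H)^{*}$. Since $\Lambda(H)$ is a closed right ideal of $VN(H)^{*}$, hence a subalgebra, and $A(H)\subseteq\Lambda(H)\subseteq VN(H)^{*}$, the absorbing property $a\square m,\ m\square a\in A(H)$ for $a\in A(H)$, $m\in VN(H)^{*}$ restricts to elements $m\in\Lambda(H)$; thus $A(H)$ is an ideal in $\Lambda(H)$.

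The substantive direction is the converse, where the plan is to show that the weaker hypothesis ``$A(H)$ is an ideal in $\Lambda(H)$'' already forces $A(H)$ to be an ideal in all of $VN(H)^{*}$. Recall (\cite[Proposition 2.6.25]{Dale}) that $A(H)$ is an ideal in its bidual precisely when the operators $l_{u}\colon v\mapsto uv$ and $r_{u}\colon v\mapsto vu$ are all weakly compact; since $A(H)$ is commutative, $r_{u}=l_{u}$, so it suffices to show each $l_{u}$ is weakly compact, equivalently $l_{u}^{**}(VN(H)^{*})\subseteq A(H)$, i.e. $u\square n\in A(H)$ for all $n\in VN(H)^{*}$ (Gantmacher's theorem). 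The key observation is that, for $u\in A(H)$ and $n\in VN(H)^{*}$, the element $u\square n$ already lies in $\Lambda(H)$: for each $E\in\mathcal{E}$ we have $E\square u=u$, so by associativity of the left Arens product $u\square n=(E\square u)\square n=E\square(u\square n)\in E\square VN(H)^{*}$, and intersecting over $E\in\mathcal{E}$ gives $u\square n\in\Lambda(H)$.

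With this in hand I would feed these elements into the hypothesis. For $u,v\in A(H)$ and $n\in VN(H)^{*}$, since $u\square n\in\Lambda(H)$ and $A(H)$ is an ideal in $\Lambda(H)$, we get $v\square(u\square n)\in A(H)$; by associativity and $v\square u=vu$ this reads $(vu)\square n\in A(H)$. Hence $w\square n\in A(H)$ for every $w$ in the linear span of $A(H)\cdot A(H)$. Since $G$ is amenable, $A(H)$ has an approximate identity bounded by one, so Cohen factorization gives $A(H)=A(H)\cdot A(H)$ (alternatively, $\overline{\langle A(H)\cdot A(H)\rangle}=A(H)$ by Remark \ref{rem3}, combined with the norm-contractivity of $w\mapsto w\square n$); either way $w\square n\in A(H)$ for all $w\in A(H)$ and all $n\in VN(H)^{*}$. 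Thus $l_{w}^{**}(VN(H)^{*})\subseteq A(H)$, so $l_{w}$ is weakly compact, and by commutativity $r_{w}=l_{w}$ is too. Therefore $A(H)$ is an ideal in $VN(H)^{*}$, and by the equivalence recalled at the outset, $H$ is discrete.

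The main obstacle is the converse, and the crucial idea is precisely the observation that left multiplication by a Fourier-algebra element carries the whole bidual into $\Lambda(H)$, which is what makes the hypothesis applicable; the remaining work is the routine factorization/closure step upgrading the conclusion from products to arbitrary elements of $A(H)$. Care must be taken to use the correct (left) Arens product throughout, to note that $\Lambda(H)$ is genuinely a subalgebra so that ``ideal in $\Lambda(H)$'' is meaningful, and to invoke contractivity of the Arens product so that $w\mapsto w\square n$ is norm continuous in the passage to all of $A(H)$.
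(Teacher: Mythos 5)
Your proposal is correct and follows essentially the same route as the paper: the crucial step in both is the observation that $u\square n=(E\square u)\square n=E\square(u\square n)\in E\square VN(H)^*$ for every $E\in\mathcal{E}$, so that $u\square n\in\Lambda(H)$ and the ideal hypothesis applies, after which one passes from products in $A(H)\cdot A(H)$ to all of $A(H)$ (the paper does this with the bounded approximate identity $u_\alpha u\square m\to u\square m$, you via Cohen factorization or density — a cosmetic difference). Your reformulation of "$A(H)$ is an ideal in its bidual" through weak compactness of $l_u$ and Gantmacher's theorem is equivalent to the paper's direct statement and changes nothing substantive.
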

\begin{proof}
Suppose that $A(H)$ is an ideal in $\Lambda(H)$ and let $(u_{\alpha})$ be an approximate identity for $A(H)$ with $\Vert u_{\alpha} \Vert \leq 1$. For $m \in VN(H)^*$ and $u \in A(H)$, we have $u \square m \in \Lambda(G)$, since $ E \square (u \square m)=(E \square u) \square m =u \square m$ for all $E \in \mathcal{E}$. Therefore, $u_{\alpha}u \square m \rightarrow u \square m$. Moreover, $u_{\alpha} u \square m =u_{\alpha} \square (u \square m) \in A(H)$, since $A(H)$ is an ideal in $\Lambda(H)$.
Hence, $u \square m \in A(H)$, which means that $A(H)$ is a right ideal in $VN(H)^*$. As $A(H)$ belong to the center of $VN(H)^*$, we conclude that $A(H)$ is an ideal in $VN(H)^*$. Therefore, by  Proposition \ref{p3} and Lemma \ref{cor:5}, $H$ is a discrete hypergroup. Conversely, if $H$ is discrete, again by Proposition \ref{p3} and  Lemma \ref{cor:5}, $A(H)$ is an ideal in $VN(H)^*$ which implies that $A(H)$ is an ideal in $\Lambda(H)$. 
\end{proof}
\begin{proposition}
Let $G$ be an amenable locally compact group and let $E\in VN(H)^*$. Then the following conditions are equivalent.\\
{\rm (i)} $E\in \mathcal{E}$.\\
{\rm (ii)} $E\geq 0$, $u\square E=u$ for all $u\in A(H)$.\\
{\rm (iii)} $\Vert E\Vert=1$ and $E$ is a right identity for $VN(H)^*$.\\
{\rm (iv)} $\Vert E\Vert=1$ and $E$ restricted to ${\rm UCB}(\hat{H})$ is the unit of ${\rm UCB}(\hat{H})^*$. 
\end{proposition}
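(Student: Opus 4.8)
The plan is to route all four conditions through (iii), proving (i)$\Leftrightarrow$(iii), (ii)$\Leftrightarrow$(iii) and (iv)$\Leftrightarrow$(iii), with two tools used repeatedly. First, $\varphi_{\dot e}$ is the identity of the von Neumann algebra $VN(H)$, so for $E\in VN(H)^*$ the standard C*-algebraic dichotomy applies: a functional with $\|E\|=1$ and $\langle E,\varphi_{\dot e}\rangle=1$ is automatically a state, hence $E\ge 0$, and conversely a positive $E$ with $\langle E,\varphi_{\dot e}\rangle=1$ has $\|E\|=\langle E,\varphi_{\dot e}\rangle=1$. Second, for fixed right factor the left Arens product $m\mapsto m\,\square\,E$ is $w^*$-$w^*$ continuous, since $\langle m\,\square\,E,T\rangle=\langle m,E\cdot T\rangle$ with $E\cdot T\in VN(H)$ fixed. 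I also record that ``$E$ is a right identity of $VN(H)^*$'' depends only on $E|_{{\rm UCB}(\hat H)}$: the condition $m\,\square\,E=m$ for all $m$ is equivalent to $E\cdot T=T$ for all $T$, i.e. to $\langle E,T\cdot u\rangle=\langle T,u\rangle$ for all $T\in VN(H)$, $u\in A(H)$, and the elements $T\cdot u$ span a dense subspace of ${\rm UCB}(\hat H)=\overline{VN(H)\cdot A(H)}$.

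The routine implications come first. For (i)$\Rightarrow$(iii): if $E\in\mathcal E$ then $E$ is a right identity with $\|E\|\le 1$; since $u\,\square\,E=u$ for $u\in A(H)$, pairing with $\varphi_{\dot e}$ and using $\varphi_{\dot e}\cdot u=u(\dot e)\varphi_{\dot e}$ gives $\langle E,\varphi_{\dot e}\rangle=1$, whence $\|E\|=1$. The implication (iii)$\Rightarrow$(ii) is then immediate: $u\,\square\,E=u$ because $E$ is a right identity, while the same pairing gives $\langle E,\varphi_{\dot e}\rangle=1$, so $E\ge 0$ by the dichotomy. For (ii)$\Rightarrow$(iii) the pairing again yields $\langle E,\varphi_{\dot e}\rangle=1$, hence $\|E\|=1$; then for $m\in VN(H)^*$ I choose (Goldstine) a bounded net $(v_\beta)\subseteq A(H)$ with $v_\beta\overset{w^*}{\longrightarrow}m$ and combine $w^*$-continuity of $\,\cdot\,\square\,E$ with $v_\beta\,\square\,E=v_\beta$ to get $m\,\square\,E=m$, so $E$ is a right identity. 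Finally (iii)$\Rightarrow$(iv) follows from Lemma \ref{lem:2}: the restriction $r\colon E\,\square\,VN(H)^*\to{\rm UCB}(\hat H)^*$ is an isometric algebra isomorphism, and $E=E\,\square\,E$ is the two-sided identity of $E\,\square\,VN(H)^*$, so $r(E)=E|_{{\rm UCB}(\hat H)}$ is the unit of ${\rm UCB}(\hat H)^*$.

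For (iv)$\Rightarrow$(iii) I would unwind the unit property. Writing $\mathcal U=E|_{{\rm UCB}(\hat H)}$, the relation $n\,\square\,\mathcal U=n$ forces $\mathcal U\cdot S=S$ for every $S\in{\rm UCB}(\hat H)$, which spelled out reads $\langle E,S\cdot a\rangle=\langle S,a\rangle$ for all $S\in{\rm UCB}(\hat H)$ and $a\in A(H)$. To promote this to $\langle E,T\cdot u\rangle=\langle T,u\rangle$ for arbitrary $T\in VN(H)$, I use the contractive approximate identity $(u_\alpha)$ of $A(H)$ provided by amenability of $G$: since $T\cdot u\in{\rm UCB}(\hat H)$ and $(T\cdot u)\cdot u_\alpha=T\cdot(uu_\alpha)\to T\cdot u$ in norm, applying the previous relation with $S=T\cdot u$ and $a=u_\alpha$ gives $\langle E,T\cdot u\rangle=\lim_\alpha\langle T,uu_\alpha\rangle=\langle T,u\rangle$. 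By the observation recorded above, $E$ is then a right identity of $VN(H)^*$, and $\|E\|=1$ is assumed, so (iii) holds.

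The crux is (iii)$\Rightarrow$(i), where a norm-one right identity must be realised as a $w^*$-cluster point of a contractive approximate identity. By Goldstine there is a net $(v_\beta)$ in the unit ball of $A(H)$ with $v_\beta\overset{w^*}{\longrightarrow}E$; the $w^*$-continuity of $u\,\square\,\cdot\,$ together with $u\,\square\,E=u$ shows $uv_\beta\to u$ weakly in $A(H)$ for each $u$, so $(v_\beta)$ is a contractive weak approximate identity clustering at $E$. The main obstacle is to upgrade weak to norm convergence without destroying the cluster point: for each finite $F\subseteq A(H)$, each $\varepsilon>0$ and each index $\beta_0$, Mazur's theorem lets me choose a convex combination $w=\sum_i\lambda_i v_{\beta_i}$ with all $\beta_i\ge\beta_0$ and $\|uw-u\|<\varepsilon$ for every $u\in F$. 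These $w$ remain in the unit ball and, directing them by $(F,\varepsilon,\beta_0)$, form a genuine (two-sided, by commutativity of $A(H)$) approximate identity; and because every convex $w^*$-neighbourhood of $E$ eventually contains all $v_\beta$, it contains these convex combinations as well, so the new net still $w^*$-converges to $E$. This exhibits $E\in\mathcal E$ and closes the equivalences.
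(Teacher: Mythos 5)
Your proof is correct, and all four implications go through, but the route differs from the paper's in how (iv) is handled and in the overall implication graph. The paper runs the cycle (i)$\Rightarrow$(ii)$\Rightarrow$(iii)$\Rightarrow$(i) together with (i)$\Rightarrow$(iv)$\Rightarrow$(ii); you hub everything at (iii). The shared core is identical: positivity from $\Vert E\Vert=1=\langle E,\varphi_{\dot e}\rangle$ at the identity of $VN(H)$, and the Day--Mazur convex-combination upgrade of a weak approximate identity to a norm-bounded one for (iii)$\Rightarrow$(i). Where you genuinely diverge is on (iv): the paper proves (i)$\Rightarrow$(iv) by a direct computation with a contractive approximate identity and Cohen factorization of ${\rm UCB}(\hat{H})$, and proves (iv)$\Rightarrow$(ii) by exploiting regularity and the Tauberian property to write $u=uv$ and the centrality of $A(H)$ in $(VN(H)^*,\square)$; you instead get (iii)$\Rightarrow$(iv) for free from Lemma \ref{lem:2} (observing $E=E\,\square\,E$ is the identity of $E\,\square\,VN(H)^*$ and the restriction is multiplicative), and (iv)$\Rightarrow$(iii) by reducing ``right identity'' to $E\cdot T=T$ and using density of $VN(H)\cdot A(H)$ in ${\rm UCB}(\hat{H})$ together with the approximate identity, which avoids both the centrality and the support argument. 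You also supply, via Goldstine and $w^*$-continuity of $m\mapsto m\,\square\,E$, the step the paper dismisses as ``trivial to notice'' in (ii)$\Rightarrow$(iii). The trade-off: your version leans on Lemma \ref{lem:2} but is cleaner on the (iv) side; the paper's is self-contained there at the cost of invoking more structure theory of $A(H)$.
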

\begin{proof}
$(i)\Rightarrow (ii)$. Since $E\in \mathcal{E}$, it is clear that $u\square E=u$ for all $u\in A(H)$. Also, if $u\in A(H)$ with $u(\dot{e})=1$, then $$\langle E,\varphi_{\dot{e}}\rangle=\langle E,u\cdot \varphi_{\dot{e}}\rangle=\langle E\square u,\varphi_{\dot{e}}\rangle= u(\dot{e})=1.$$ Since $\Vert E\Vert \leq 1$, it follows that $\Vert E\Vert=\langle E,\varphi_{\dot{e}}\rangle=1$. This shows that $E\geq 0$.

$(ii)\Rightarrow (iii)$. By the same argument as used in the proof of $(i)\Rightarrow (ii)$, we have $\langle E,\varphi_{\dot{e}}\rangle=1$. Furthermore, $E\geq 0$ which implies that $\Vert E\Vert=\langle E,\varphi_{\dot{e}}\rangle=1$. It is trivial to
notice that $E$ is a right identity for $VN(H)^*$.

$(iii)\Rightarrow (i)$. Let $(u_\alpha )$ be a net of states in $A(H)$ such that $u_\alpha \overset{w^*}\longrightarrow E$. Then $(u_\alpha )$ is a weak approximate identity in $A(H)$. A standard argument as used in the proof of \cite[Theorem 1]{Day}, shows that we can find a net $(v_\beta )$ consisting of convex combination of elements in $(u_\alpha )$ such that $\Vert v_\beta u-u\Vert \rightarrow 0$ for all $u\in A(H)$ and $v_\beta \overset{w^*}\longrightarrow E$ in $VN(H)^*$. Thus $E\in \mathcal{E}$. 

$(i)\Rightarrow (iv)$. Since $E\in \mathcal{E}$, there is a bounded approximate identity $(u_\alpha )$ for $A(H)$ such that $u_\alpha \overset{w^*}\longrightarrow E$. Moreover, for each $T\in {\rm UCB}(\hat{H})$, there exist $S\in VN(H)$ and $u\in A(H)$ such that $T=S.u$. Hence, for each $m\in {\rm UCB}(\hat{H})^*$ we have
\begin{eqnarray*}
\langle E\square m, T\rangle &=& \lim \langle u_\alpha \square m,T\rangle =\lim \langle u_\alpha ,m\cdot T\rangle \\
&=& \lim \langle m,T\cdot u_\alpha \rangle = \lim \langle m,(S\cdot u)\cdot u_\alpha \rangle \\
&=& \langle m,S\cdot u \rangle = \langle m,T \rangle.
\end{eqnarray*} 
Therefore, $E\square m=m$. This shows that $E$ is a left identity when restricted to ${\rm UCB}(\hat{H})$. Thus, $E|_{{\rm UCB}(\hat{H})}$ is the unit of ${\rm UCB}(\hat{H})^*$. 

$(iv)\Rightarrow (ii)$. Since $\varphi_{\dot{e}}\in {\rm UCB}(\hat{H})$, by the same argument as used in the proof of $(i)\Rightarrow (ii)$, we obtain that $\langle E,\varphi_{\dot{e}}\rangle=1$. Thus, $E\geq 0$ on $VN(H)$. Let $u\in A(H)$. Since $A(H)$ is Tauberian, we
can suppose that $u$ has compact support. Using the regularity of $A(H)$, there exists $v\in A(H)$ such that $v|_{{\rm supp}(u)}\equiv 1$. Since $T\cdot v\in {\rm UCB}(\hat{H})$ for any $T\in VN(H)$, we obtain from assumption that
\begin{eqnarray*}
\langle u\square E,T\rangle &=& \langle E\square u\square v,T \rangle =\langle E\square u,v\cdot T \rangle \\
&=& \langle E\square u,T\cdot v \rangle = \langle u ,T\cdot v \rangle \\
&=& \langle uv,T\rangle 
=\langle u,T\rangle. 
\end{eqnarray*}
Hence, $u\square E=u$ for all $u\in A(H)$.
\end{proof}


\end{document}